\documentclass[11pt,twoside,a4paper,notitlepage]{article}

\usepackage{amsmath}

\usepackage{amssymb}
\usepackage{amsfonts}
\usepackage[a4paper,margin=2cm,vmargin={1cm,2cm},includeheadfoot]{geometry}
\usepackage[T1]{fontenc}
\usepackage{amsthm}
\usepackage{enumerate}
\usepackage[scaled=0.92]{helvet}
\usepackage{srcltx}
\usepackage{graphicx}
\usepackage{tikz}
\usepackage{booktabs}

\usepackage[labelfont=bf,margin=1cm,justification=justified,labelsep=period]{caption}



\theoremstyle{plain}
\newtheorem{theorem}{Theorem}
\newtheorem{proposition}[theorem]{Proposition}

\newtheorem{property}[theorem]{Property}

{\theoremstyle{definition}
\newtheorem{remark}[theorem]{Remark}}

{\theoremstyle{definition}
}


\newcommand{\R}{{\mathbb {R}}}
\newcommand{\N}{{\mathbb{N}}}

\newcommand{\ah}{A_{\mathcal{H}}}
\newcommand{\af}{A_{\mathcal{F}}}
\newcommand{\ac}{A_{\mathcal{C}}}

\newcommand{\bs}{\boldsymbol}   

\definecolor{gris}{gray}{0.5}

\begin{document}

\begin{center}
{\LARGE\bf
 On the norming constants for  normal maxima} \\ [0.5 cm]

{\sc\Large Armengol Gasull, Maria Jolis  and Frederic Utzet} \\ [0.5 cm]
\end{center}

\noindent{\sl\large Departament de Matem\`{a}tiques, Edifici C,  Universitat Aut\`{o}noma de Barcelona,
08193 Bellaterra (Barcelona) Spain.  E-mails:  gasull@mat.uab.cat, mjolis@mat.uab.cat,
 utzet@mat.uab.cat}

\begin{center}
\begin{minipage}{\linewidth}
\begin{small}

\noindent{\bf Abstract.} In a remarkable paper, Peter Hall [{\it On
the rate of convergence of normal extremes}, J. App. Prob, {\bf 16}
(1979) 433--439] proved that the supremum norm distance between the
distribution function of the normalized maximum of $n$ independent
standard normal random variables and the distribution function of
the Gumbel law is bounded by $3/\log n$. In the present paper we
prove that choosing a different set of norming constants that bound
can be reduced to $1/\log n$. As a consequence, using the asymptotic
expansion  of a
 Lambert $W$ type function,
 we propose new explicit constants for the maxima
of normal random variables.

\smallskip

{\bf Keywords:} Gaussian law, extreme value theory, Lambert $W$ function.

\smallskip

{\bf AMS classification:} 60G70,  60F05, 62G32

\end{small}
\end{minipage}
\end{center}

\section{Introduction}

 Let  $X_1,\dots,X_n$ be i.i.d. standard normal random variables
 and denote by $M_n$ its maximum,
$$M_n=\max\{X_1,\dots,X_n\}.$$
The normal law is in the domain of attraction for maxima of the
Gumbel law, that is,
 there are sequences of real numbers $\{a_n,\ n\ge 1\}$ and $\{b_n,\ n\ge 1\}$
(the {\it norming --or normalizing-- constants}) with $a_n>0$ such
that
\begin{equation}
\label{conv0}
 \lim_n \frac{1}{a_n}\,(M_n-b_n)= G, \ \text{in
distribution},
\end{equation}
where $G$ is a Gumbel random variable, with  distribution function
\begin{equation}
\label{gumbel-dist}
\Lambda(x)=\exp\{-e^{-x}\},\ x\in \R.
\end{equation}
Denote by $\Phi(x)$ the distribution function of a standard normal
law and by $\ \phi(x)$ its density. The convergence (\ref{conv0}) is
equivalent that for every $x\in \R$,
\begin{equation}
\label{conv2} \lim_n \Phi^n (a_nx+b_n)=\Lambda(x).
\end{equation}
In a remarkable paper Peter Hall
 \cite{Hall79}  proved that taking $b^*_n$ such that

$$\dfrac{1}{\sqrt{2\pi}}\,\dfrac{1}{b_n^*}\,e^{-(b_n^*)^2/2}=\frac{1}{n}\quad\mbox{and}\quad
a^*_n=1/b^*_n,$$
it holds  that for $n\ge 2$,
\begin{equation}
\label{DesHall}
\frac{C'}{\log n}<\sup_{x\in \R}\vert \Phi^n (a_n^*x+b_n^*)-\Lambda(x)\vert < \frac{C}{\log n},
\end{equation}
with $C=3$, and that the rate of convergence cannot be improved by
choosing a different sequence of norming constants. In this way,
Hall gives  a precise quantification of the remark  by Fihser and
Tippet in the   the seminal paper \cite{FisherTippett28}: {\it From
the normal distribution the limiting distribution is approached with
extreme slowness}. Notice that if $2\le  n\le 20$, then  $3/\log
n>1$, so the upper bound in (\ref{DesHall}) gives no information. It
should also be remarked that Hall \cite{Hall79} points out  that his
constant $C$ in (\ref{DesHall}) can be decreased to $0.91$ when
$n\ge 10^6.$

In the present paper we prove that taking
\begin{equation}
\label{anth}
b_n=\Phi^{-1}\big(1-\frac 1n\big)\quad \mbox{and}\quad a^\circ_n=\frac{b_n}{1+b_n^2},
\end{equation}
we have the following theorem.

\begin{theorem}
\label{maintheorem}
 Given $n_0\ge5,$ for all $n\ge n_0$ it
holds that
\begin{equation*}
\sup_{x\in \R}\vert \Phi^n \big(a^\circ_n\, x+b_n\big)-\Lambda(x)\vert
<\frac{C(n_0)}{\log n},\end{equation*} with
\begin{equation*}
C(n_0)=\begin{cases}1,&\quad\mbox{when}\quad n_0\le 15\\
 \Big(\dfrac{2}{3b_{n_0}^2} +\dfrac{1}{\sqrt{e}n_0}\Big)\log (n_0)<1&\quad\mbox{when}\quad n_0\ge
16.\end{cases}
\end{equation*}
Moreover\, $\lim_{n_0\to\infty} C(n_0)=1/3.$
\end{theorem}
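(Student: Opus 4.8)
The plan is to bound the supremum distance by splitting the error into two contributions: the discrepancy between $\Phi^n(a_n^\circ x + b_n)$ and an intermediate exponential approximation, and the discrepancy between that approximation and the Gumbel law $\Lambda(x)$. The natural starting point is the identity $\Phi^n(a_n^\circ x + b_n) = \exp\{n\log\Phi(a_n^\circ x + b_n)\}$, so I would first study the function $u_n(x) := -n\log\Phi(a_n^\circ x + b_n)$ and compare it with $e^{-x}$, since $\Lambda(x) = \exp\{-e^{-x}\}$. The key observation driving the choice of constants in \eqref{anth} is that $b_n = \Phi^{-1}(1 - 1/n)$ makes $1 - \Phi(b_n) = 1/n$ exactly, which should give $u_n(0)$ very close to $1$, and the slope $a_n^\circ = b_n/(1+b_n^2)$ is tuned using the Mills-ratio expansion $1 - \Phi(t) \sim \phi(t)/t$ and its derivative so that the first-order behaviour of $u_n$ near the relevant range of $x$ matches $e^{-x}$ as tightly as possible.

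\medskip

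Next I would reduce the sup over $x \in \R$ to a controlled estimate using the mean value theorem on the outer exponential: since $|e^{-s} - e^{-t}| \le e^{-\min(s,t)}|s - t|$, one has
\begin{equation*}
\bigl|\Phi^n(a_n^\circ x + b_n) - \Lambda(x)\bigr| = \bigl|e^{-u_n(x)} - e^{-e^{-x}}\bigr| \le e^{-\min(u_n(x),\,e^{-x})}\,\bigl|u_n(x) - e^{-x}\bigr|.
\end{equation*}
This transfers the whole problem to estimating $|u_n(x) - e^{-x}|$ weighted by the decaying factor. The main technical work is to expand $\log\Phi(a_n^\circ x + b_n)$ accurately. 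Writing $1 - \Phi(t) = \phi(t)\bigl(1/t - 1/t^3 + \cdots\bigr)$ (the asymptotic Mills ratio) at the point $t = a_n^\circ x + b_n$, and using $\log\Phi(t) = \log(1 - (1-\Phi(t))) \approx -(1-\Phi(t))$ when $1-\Phi(t)$ is small, I would show that $u_n(x)$ equals $e^{-x}$ times a factor that is close to $1$, with an explicit error governed by powers of $1/b_n^2$ (equivalently $1/\log n$, since $b_n^2 \sim 2\log n$). The appearance of the leading constant $2/(3b_{n_0}^2)$ and the limiting value $1/3$ in the theorem strongly suggests that the dominant error term comes from the second-order term in the Mills-ratio / Taylor expansion, contributing a factor of order $1/(3\log n)$, with the supremum over $x$ of the weighting $e^{-x}\cdot(\text{something})$ producing the constant.

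\medskip

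To get the fully explicit constant $C(n_0)$ rather than an asymptotic statement, I would carry out the expansion with honest remainder control, keeping track of two regimes: moderate $x$ (where the exponential weight $e^{-\min(u_n,e^{-x})}$ is bounded and the Taylor remainder is small) and large $|x|$ (where either $u_n(x)$ or $e^{-x}$ is large and the factor $e^{-\min}$ forces the product to be tiny). The term $1/(\sqrt{e}\,n_0)$ in the second case of $C(n_0)$ looks like it arises from bounding a specific subexpression by its maximum, likely $\sup_x x e^{-x} \cdot(\text{const}) = 1/e$ type bound or a $\phi$-evaluation, and I would pin it down by optimizing the relevant one-variable function. The split into $n_0 \le 15$ (giving the clean bound $1$) versus $n_0 \ge 16$ presumably reflects that the sharper algebraic bound $\bigl(2/(3b_{n_0}^2) + 1/(\sqrt{e}\,n_0)\bigr)\log n_0$ only drops below $1$ once $b_{n_0}^2$ is large enough, so one verifies numerically that $b_{16}$ crosses the needed threshold while for $5 \le n_0 \le 15$ a direct (possibly computer-assisted) check of $\sup_x |\cdots| < 1/\log n$ is used.

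\medskip

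The hard part will be controlling the remainder in the Mills-ratio expansion \emph{uniformly and explicitly} in $x$, especially over the range where $a_n^\circ x + b_n$ is no longer large and the asymptotic series for $1-\Phi$ is least accurate; there one must either use a monotone bound on the true Mills ratio or confine $x$ to a region where the expansion is provably valid and handle the complementary region by the crude decay estimate. A secondary obstacle is establishing the limit $\lim_{n_0\to\infty} C(n_0) = 1/3$, which requires showing that the $1/(\sqrt{e}\,n_0)\log n_0$ term vanishes and that $(2/(3b_{n_0}^2))\log n_0 \to 2/(3\cdot 2) = 1/3$, using the precise asymptotics $b_{n_0}^2 = 2\log n_0 - \log\log n_0 - \log(4\pi) + o(1)$ coming from inverting the normal tail (this is where the Lambert $W$ asymptotics advertised in the abstract enter).
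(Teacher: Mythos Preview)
Your broad strategy---write $\Phi^n(a_n^\circ x+b_n)=e^{-u_n(x)}$, compare $u_n(x)$ with $e^{-x}$ via a mean-value bound, and control the difference using Mills-ratio expansions---is in the same spirit as the paper. However, the proposal is missing the specific technical devices that make the argument go through with the stated constants, and it misidentifies where the numbers $2/3$ and $1/(\sqrt{e}\,n_0)$ actually come from.

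The paper does not work with a truncated Mills-ratio asymptotic series. Instead it writes $1-\Phi(a_n^\circ x+b_n)=\tfrac1n\exp\{-I_n(x)\}$ with $I_n(x)=\int_{b_n}^{a_n^\circ x+b_n}V(t)\,dt$, where $V(t)=\phi(t)/(1-\Phi(t))$ is the \emph{reciprocal} of the Mills ratio, and then exploits the two-sided bound $t<V(t)<t+1/t$. This converts everything into exact polynomial inequalities for $I_n(x)-x$ (e.g.\ $|I_n(x)-x|\le x^2/(2b_n^2)$ in one regime), avoiding the remainder-control difficulty you flag as ``the hard part.'' Your proposed route through the asymptotic series $1-\Phi(t)\sim\phi(t)(1/t-1/t^3+\cdots)$ would not give the clean uniform control needed, especially for $x<0$.

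Second, the dominant contribution is the negative-$x$ regime, and it is far more delicate than a ``moderate vs.\ large $|x|$'' split. The paper uses a three-zone decomposition of $(-\infty,0)$ with the non-obvious cutoff $-1.25\log b_n$; the constant $2/3$ arises not from a second-order Mills term but from the explicit optimization $\max_{y\ge1}\tfrac{4}{e^2}y^2\exp\{(2-23y^2)\log y/(10(y^2+1))\}<2/3$ in the middle zone. The term $1/(\sqrt{e}\,n_0)$ comes from bounding $\Lambda(x)|e^{-nS_n(x)}-1|\le n^{-1}e^{-1/2}e^{-(x+1)^2/2}$ in the zone $x\in(-1.25\log b_n,0)$, not from a $\sup_x xe^{-x}$ computation. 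Finally, no computer check is needed for $5\le n_0\le 15$: for $x<0$ both $\Lambda(x)$ and $\Phi^n(a_n^\circ x+b_n)$ are below $1/e<1/\log n$, and the $x\ge0$ bound $C^+(n_0)<1$ is already established analytically.
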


The above result is quite sharp because our numerical analysis shows
that when $n_0$ moves in the range $[10^{20},10^{60}]$, then
$C(n_0)$ cannot be taken smaller than $0.12$, see
Table~\ref{comparison-t}. In Proposition~\ref{p4} we give some
bounds for $\{b_n^2\}$ that in particular prove that when
$n_0\ge16,$
\[
C(n_0)\le  \widetilde{C}(n_0)=\dfrac 1 3\dfrac{1}{1-\dfrac{\log(4\pi
\log n_0) )}{2\log n_0}} +\dfrac{\log n_0}{\sqrt{e}n_0},
\]
obtaining  explicit and simple computable upper bounds for $C(n_0)$.
To have an idea of how $C(n_0)$ and $\widetilde{C}(n_0)$ change with $n_0$ we
present some values in Table~\ref{t1}.

\begin{table}[htb]
\centering
\begin{tabular}{ccccccccccc}
\toprule
$n_0$&  $16$ & 30& 50& $10^2$ & $10^4$ & $10^6$ & $10^{10}$ & $10^{20}$&$10^{100}$\\
\midrule
$C(n_0)$  & 0.90&0.75 &0.67& 0.60 &  0.45&  0.41&   0.38 &0.36&0.34\\
\midrule
$\widetilde{C}(n_0)$  & 1.10&0.82 &0.72& 0.63 &  0.45&  0.41&   0.38 &0.36&0.34\\
\bottomrule
\end{tabular}
\caption{Several upper approximations for $C(n_0)$ and
$\widetilde{C}(n_0)$.}\label{t1}
\end{table}

From a practical point of view, in order to have explicit expressions of the
constants, it is suggested the following  asymptotic equivalents to
the norming constants $b^*_n$ and  $a_n^*$, respectively
 (Hall \cite[Diplay (4)]{Hall79}):
\begin{equation*}
 \beta^*_n=(2\log n)^{1/2}-\log(4\pi\log n)/\big(2(2\log n)^{1/2}\big)
\quad \text{and}\quad \alpha^*_n=1/\beta^*_n.
\end{equation*}
(It is also  proposed $\alpha^*_n=(2\log n)^{-1/2}$,
see, for example, Resnick \cite[pp. 71--72]{Res87}).
The expression of $\beta_n^*$ is easily deduced by observing that
$b^*_n$ can be expressed in terms of the Lambert W function (Corless
{\it et al.} \cite{CorGonHarJefKnu86}) and its well known
asymptotics, see  Section~\ref{se:5}.

However, in view of Theorem \ref{maintheorem} and our numerical
computations (see again Table~\ref{comparison-t}),  on the
one hand, it seems sensible
to approach accurately $b_n$, rather than $b^*_n$,  and  we propose
 the constant
\begin{equation}\label{betafinal}
\beta_n=\bigg(\log\big(n^2/(2\pi)\big)-\log\log\big(n^2/(2\pi)\big)+
\frac{\log\big(\log
(n^2)+1/2\big)-2}{\log\big(n^2/(2\pi)\big)}\bigg)^{1/2},
\end{equation}
that, as we will see, satisfies \[
b_n=\beta_n+O\left(\dfrac{(\log\log n)^2}{(\log
n)^{5/2}}\right),\quad n\to\infty.
\] On the other hand, by   Remark~\ref{comp} and
once more Table~\ref{comparison-t}, it seems convenient to use
$$\alpha_n=\frac{\beta_n}{1+\beta_n^2},$$ rather than $1/\beta_n$.
The expression of $\beta_n$ is derived from an asymptotic expansion
of $b_n$  using an approximation to Mills ratio by rational
functions (see Subsection \ref{sub:canonical}),   an extension of
the asymptotics of the Lambert function to a more general class of
functions (Subsection \ref{subsec:comtet}) and a final refinement
motivated by some numerical computations  (Subsection
\ref{subsec:numer}).

The paper is organized in the following way: In Section 2 there are recalled
some elementary facts about Extreme Value Theory for  normal random variables,
and there are  presented  graphical and numerical comparative studies of the performance of the constants
$a^*_n$ and $b^*_n$ versus  $a_n^\circ$ and $b_n$, and other proposals.
In Section 3 there are presented some technical preliminary results needed
in next sections. Section 4 is devoted to proof of Theorem \ref{maintheorem}.
Finally, in Section 5, new explicit expressions of the norming constant
are given.

\section{Extreme value theory for the normal law}
\label{sec:extremevalue}

By classical Extreme Value  Theory,   the norming constant $b_n$
 in (\ref{conv0})  can be taken, and we take, in agreement with the notation
(\ref{anth}),
\begin{equation}
\label{bn}
 b_n=\Phi^{-1}(1-n^{-1}),
\end{equation}
The constant $a_n$ can be chosen to be
\begin{equation}
\label{cn} a_n=A(b_n),
\end{equation}
where $A$ is an auxiliary function corresponding to $\Phi$ (see,
for example,  Resnick \cite[Proposition 1.11]{Res87}). Auxiliary
functions are not unique though they are asymptotically  equal.
Moreover, under certain conditions, an  auxiliary
function  is (see again  Resnick \cite[Proposition 1.11]{Res87}) the
 quotient of the survival function (one minus the distribution
function) and the density function, that is,
\begin{equation}
\label{aux-canon} \ac(x)=\frac{1-\Phi(x)}{\phi(x)},
\end{equation}
which is called the Mills ratio.  Since this function
is expressed in terms of the distribution function and the density
function, and does not depend on any other computation,  we call it
the {\it canonical} auxiliary function. We should remark that from
the standard proof of the convergence (\ref{conv0}) it is not
deduced that the constants $b_n$ and $\ac(b_n)$  produce more
accurate results than other constants computed with other auxiliary
functions or other ways.

To find manageable expression of the constants it is   used a
property of the convergence in law adapted to this context:

\begin{property}
\label{propietat1}
 With the preceding notations, if the sequences
$\{a_n',\, n \ge 1\}$ and $\{b_n',\, n \ge 1\}$ satisfy
$$
\lim _n \dfrac{a_n}{a'_n}=1\quad \text{and}\quad
 \lim_n\dfrac{b_n-b'_n}{a_n}=0,$$
 then
$$\lim_{n}\frac{1}{a'_n}\big(M_n-b'_n\big)=G\ \text{in distribution.}$$
\end{property}

Moreover, it is very  useful the following property  that involves the  use of  the norming constants of a simpler distribution
 function right tail equivalent to $\Phi$:

\begin{property} Let $F$ be a distribution function right tail equivalent
 to $\Phi$, that means,
$$\lim_{x\to \infty}\frac{1-\Phi(x)}{1-F(x)}=1.$$
Then the norming constants  of  $F$ and $\Phi$ can be taken equal.
\end{property}

\bigskip

 Thanks to the well known  asymptotics of the  Mills ratio,
$$\lim_{x\to\infty}\frac{1-\Phi(x)}{\dfrac{1}{\sqrt{2\pi}}\,\dfrac{1}{x}\,e^{-x^2/2}}=1,$$
 we can consider a distribution function $F$ such that there is  some
$x_0$, such that for $x>x_0,$
\begin{equation}
\label{F}
F(x)=1-\dfrac{1}{\sqrt{2\pi}}\,\dfrac{1}{x}\,e^{-x^2/2},
\end{equation}
and we deduce other possible constants: $b_n^*$ is given by
$$b_n^*=F^{-1}(1-n^{-1}),$$
or, equivalently, $b_n^*$ verifies
\begin{equation}
\label{bnHall}
\dfrac{1}{\sqrt{2\pi}}\,\dfrac{1}{b_n^*}\,e^{-(b_n^*)^2/2}=\frac{1}{n}.
\end{equation}
On the other hand, the canonical auxiliary function associated to $F$ is
\begin{equation}
\label{auxFT} \af(x)=\frac{x}{1+x^2}.
\end{equation}
We call this auxiliary function $\af$ because this  was the election of Fisher and
Tippett   \cite{FisherTippett28}. Note that $\ac$ and $\af$ are  asymptotically equivalent.
In our early  notations (\ref{anth}), we take
$$a_n^\circ=\af(b_n).$$

Furthermore,  it is typical to use a simpler  function
asymptotically  equivalent to both  $\af$ and $\ac$ given by
$$\ah(x)=\frac{1}{x}.$$
We write $$a_n^*:=\ah(b_n^*),$$
and we call $a^*_n$ and $b_n^*$  the {\it Hall's constants}.
 Hall didn't introduce such constants, that are classical (indeed, $b_n^*$ was proposed
 by Fisher and
Tippett \cite{FisherTippett28}), but as we commented,  Hall
 \cite{Hall79}  proved the  rate of convergence
(\ref{DesHall})  with these constants.
However, numerical studies
 show that
other norming constants give more accurate results that Hall's ones.
 In Figure
\ref{densitat100}
there is a plot of the Gumbel density and the density of
the random variables
$$Y^*_n=\frac{1}{a^*_n}(M_n-b^*_n)\quad \text{ and } \quad  Y_n=\frac{1}{a_n^\circ}(M_n-b_n).$$
for $n=100$.

\begin{figure}[htb]
\centering
\includegraphics[width=8cm]{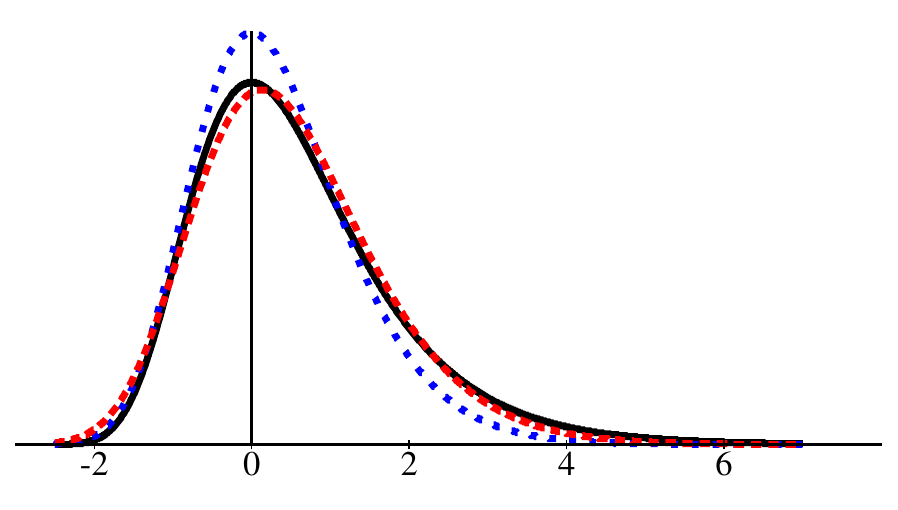}
\caption{Gumbel density and   density of the m\`{a}ximum of 100 standard Gaussian  random variables with different
norming constants. Solid line:
Gumbel density. Dotted blue line:  Density of  $Y^*_n$.
Dashed red line:  Density of $Y_n$.}
\label{densitat100}
\end{figure}

In order to assess the  velocity of convergence,  we numerically
compute an approximation of the distances $$\sup_{x\in \R}\vert
\Phi^n (A(b_n)\, x+b_n)-\Lambda(x)\vert\log (n),$$ for several
values of  $n$, and for different auxiliary functions $A$, and also
with Hall's constants $a^*_n$ and $b^*_n$, and  with the constants
proposed by Fisher and Tippet \cite{FisherTippett28} that are
$b_n^*$ and $A_{\cal F}(b^*_n)$. Those approximations are obtained
computing numerically the maxima of the corresponding functions. We
have used Maple.  The results are given in Table \ref{comparison-t}.

\begin{table}[htb]
\centering
\begin{tabular}{ccccccccc}
\toprule
& & &\multicolumn{6}{c}{$n$}\\
{Used in}&\multicolumn{2}{l}{\quad Constants}&  $10$ & $10^3$ & $10^{10}$ & $10^{30}$ & $10^{50}$ & $10^{60}$\\
\midrule
Theorem \ref{maintheorem} &$b_n$   & $a_n^\circ=A_{\cal F}(b_n)$   & 0.0420 & 0.1049 &  0.1191&  0.1208&   0.1208 &0.1207\\
Proposition \ref{posbis}&$b_n$   & $A_{\cal H}(b_n)$   & 0.3117 & 0.2752 & 0.2552 & 0.2465 & 0.2443 &0.2437 \\
&$b_n$   & $A_{\cal C}(b_n)$   & 0.1201 & 0.1260  & 0.1245 & 0.1224 & 0.1217 & 0.1215  \\
Fisher \& Tippett& $b^*_n$ & $A_{\cal F}(b_n^*)$ & 0.2331 & 0.2268 & 0.1997 & 0.1945 & 0.1938 & 0.1936  \\
Hall &$b^*_n$ & $a_n^*=A_{\cal H}(b^*_n)$ & 0.3546  &  0.3650&  0.3461 &  0.3354&  0.3324 & 0.3316 \\
\bottomrule
\end{tabular}
\caption{Approximate distance $\sup_{x\in \R} \vert \Phi^n (A(b_n)\,
x+b_n)-\Lambda(x)\vert\log(n)$ for different sample size $n$ and
different sets of norming constants.} \label{comparison-t}
\end{table}

%

Our purpose is to get theoretical explanations of  those numerical
results and  our main result is Theorem \ref{maintheorem} given in
the Introduction. We restrict  our study to  the cases where
$a_n=a_n^\circ=A_{\cal F}(b_n)$ and $a_n=A_{\cal H}(b_n)$, and we omit the
case $a_n=A_{\cal C}(b_n)$: the reasons for that omission  are the following:
First,  numerically and analytically $A_{\cal F}(b_n)$ is much
simpler that the $A_{\cal C}(b_n)$; second, Table \ref{comparison-t}
suggests that the performances of both $a_n=A_{\cal F}(b_n)$ and
$a_n=A_{\cal C}(b_n)$ are very similar; and finally, the study of
$A_{\cal C}(b_n)$ has its own details and tricks, and its study
would enlarge significantly  the paper.


\section{Preliminary results}

This section is divided in two parts. In the first one we  prove a couple
of
properties of the sequence $\{b_n\}$ which are needed in the sequel.  In the  second part we  introduce
the reciprocal of the canonical auxiliary function  that allows to
express in a  convenient way the difference $\Phi^n (a_n
x+b_n)-\Lambda(x)$.

\subsection{Bounds for $b_n$}

We prove  that the bounds for $(b_n^*)^2$ given  by Hall
\cite[display (2)]{Hall79}:
\begin{equation}
\label{fitesHall} 2\log n-\log(4\pi \log n) < (b_n^*)^2 <2\log n,
\end{equation}
are also satisfied for $b_n^2$.

\begin{proposition}\label{p4}
For each $n\ge 2$ the following inequalities hold:
\begin{equation}\label{cr-bn}
2\log n-\log(4\pi \log n)<b^2_n<2\log n.
\end{equation}

\end{proposition}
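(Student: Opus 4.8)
The plan is to exploit that $b_n$ is characterized by the single scalar equation $1-\Phi(b_n)=1/n$ together with the strict monotonicity of the survival function $x\mapsto 1-\Phi(x)$. For $n\ge 2$ we have $1-1/n\ge 1/2$, hence $b_n\ge 0$; writing $u_n:=\sqrt{2\log n-\log(4\pi\log n)}$ (understood as $0$ when the radicand is nonpositive), the two claimed inequalities $u_n^2<b_n^2<2\log n$ are, by monotonicity, exactly equivalent to the pair of explicit tail estimates
\[
1-\Phi\big(\sqrt{2\log n}\big)<\frac 1n<1-\Phi(u_n),
\]
the right inequality being vacuous whenever the radicand defining $u_n$ is $\le 0$. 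Thus the whole statement reduces to bounding the Gaussian tail at the two explicit points $\sqrt{2\log n}$ and $u_n$, and the natural tool is the Mills ratio whose asymptotics are recalled above. I note in passing that one could instead try to transfer Hall's bounds (\ref{fitesHall}): the upper Mills estimate gives $1/n<\phi(b_n)/b_n$, and since $x\mapsto\phi(x)/x$ is decreasing this yields $b_n<b_n^*$, which transfers Hall's upper bound immediately; but precisely because $b_n<b_n^*$ the lower bound cannot be transferred this way and must be proved directly.

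For the upper bound I would use only the elementary estimate $1-\Phi(x)<\phi(x)/x$, valid for $x>0$. Evaluating at $x=\sqrt{2\log n}$ and using $\phi\big(\sqrt{2\log n}\big)=\frac{1}{\sqrt{2\pi}}e^{-\log n}=\frac{1}{n\sqrt{2\pi}}$ gives $1-\Phi\big(\sqrt{2\log n}\big)<\frac{1}{n\sqrt{4\pi\log n}}$, which is smaller than $1/n$ as soon as $4\pi\log n>1$; this holds for every $n\ge 2$, so $b_n^2<2\log n$. This half is clean and requires no case distinction.

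For the lower bound the same scheme applies with a lower Mills estimate. A direct computation gives the convenient identity $\phi(u_n)=\frac{1}{n}\sqrt{2\log n}$, so that the target $1-\Phi(u_n)>1/n$ becomes, upon inserting the crude bound $1-\Phi(x)>\frac{x}{1+x^2}\phi(x)$, the elementary requirement $u_n\sqrt{2\log n}>1+u_n^2$. Squaring and writing $t=u_n^2$ and $D=\log(4\pi\log n)$ (so $2\log n=t+D$), this is $(D-2)t>1$; since $4\pi\log 2>e^2$ we always have $D>2$ for $n\ge 2$, but the factor $t$ can be arbitrarily small.

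The main obstacle is exactly this last point: the crude lower Mills bound is too weak for the smallest admissible $n$, and one checks that $(D-2)t>1$ fails for $n\in\{5,6,7\}$ (the first values for which the radicand is positive). I see two remedies. Either replace the crude estimate by the sharper lower bound $1-\Phi(x)>\frac{2}{x+\sqrt{x^2+4}}\,\phi(x)$, for which the required inequality becomes $2\sqrt{2\log n}>u_n+\sqrt{u_n^2+4}$ and reduces, after squaring twice and again using $D>2$, to the trivial $4>0$, giving a uniform computation-free argument for all $n\ge 2$; or keep the crude bound for $n\ge 8$, where $(D-2)t>1$ holds, and settle the finitely many cases $n=2,\dots,7$ from the explicit values of $\Phi^{-1}(1-1/n)$. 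I would present the sharper-bound route, as it handles the delicate small-$n$ regime without any ad hoc checking.
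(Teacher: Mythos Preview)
Your argument is correct, and for the lower bound it is genuinely different from the paper's. For the upper inequality both proofs run on the same idea---compare the Gaussian tail at $\sqrt{2\log n}$ with $1/n$---though the paper bounds $1-\Phi(y)$ by $e^{-y^2/2}$ via the integrand comparison $\phi(x)<x e^{-x^2/2}$ for $x\ge 1/\sqrt{2\pi}$, while you use the Mills estimate $1-\Phi(x)<\phi(x)/x$; your version is slightly cleaner in that it needs no separate check at $n=2$. For the lower inequality the paper changes variables through $h(z)=\sqrt{2\log z-\log(4\pi\log z)}$ and reduces everything to the positivity of $g(u)=\log(4\pi u)-2+1/(2u)$ on $[\log 5,\infty)$, staying entirely within the rational Mills bounds of~\eqref{fitesA}. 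You instead invoke the sharper Birnbaum--Sampford lower bound $1-\Phi(x)>\dfrac{2\phi(x)}{x+\sqrt{x^2+4}}$, which combined with the exact identity $\phi(u_n)=\sqrt{2\log n}/n$ collapses the problem to $(2\log n)(D-2)+1>0$ with $D=\log(4\pi\log n)>2$; this is uniform in $n\ge 2$ and avoids the separate small-$n$ analysis. (Your sentence that the computation ``reduces to the trivial $4>0$'' is not literally what the algebra gives---the squaring yields $(2\log n)(D-2)+1>0$---but the conclusion is immediate from $D>2$, so the argument stands.) The trade-off is that your route imports a Mills inequality not in the paper's declared toolkit~\eqref{fitesA}, whereas the paper's route is self-contained at the cost of a slightly longer calculation.
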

\begin{proof}
First of all, observe that for $n=2$ we have that $b_2=0$, while
$2\log 2-\log(4\pi \log 2)<0$ and $2\log 2>0$. So, we consider the
case $n\ge 3$. To see the right hand side inequality in (\ref{cr-bn}),
 we will prove that for $n\ge 3$,
$$1-\frac1n<\Phi\big(\sqrt{2\log n}\,\big).$$
By the change  of variables $y=\sqrt{2\log n}$, this inequality is
equivalent to
$$1-e^{-y^2/2}<\Phi(y),$$
for $ y\ge \sqrt{2\log 3}\approx 1.14823$. This is the same that
$$\int_{y}^{\infty}\frac1{\sqrt{2\pi}}\, e^{-x^2/2} dx<\int_{y}^{\infty}x\, e^{-x^2/2}
dx,$$ for $ y\ge \sqrt{2\log 3}$. And this inequality is clear
because $\frac1{\sqrt{2\pi}}\approx 0.3989$.

In order to prove  the inequality on the left-hand side of (\ref{cr-bn}),
 the
argument is similar. First, the function $ H(z):=2\log z-\log\big
(4\pi\log z)\big),\ z>1$, is strictly increasing, being negative for
$z=3$ and $z=4$, and positive for $z=5$. So we will prove the
inequality for
 $n\ge 5$.
 Define
$$h(z):=\sqrt{2\log z-\log(4\pi\log z)}=\sqrt{H(z)}.$$
It is clear that $h$ is strictly increasing and maps each interval
$[n,\infty)$ into $[h(n),\infty)$, for all $n\ge 5$. We must show
that  $\Phi(h(n))<1-1/n$ or, equivalently, that
$$\int_{h(n)}^{\infty}\dfrac1{\sqrt{2\pi}}\, e^{-x^2/2}\, dx>\frac1n=\int_n^{\infty}\frac1{y^2}\,dy.$$
By the change of variables $y=h^{-1}(x)$, the left hand side
of the above inequality is equal to
\begin{align*}
\int_n^{\infty}\dfrac1{\sqrt{2\pi}}\, & \exp\{-(2\log
y-\log(4\pi\log y)/2\}\, h'(y)\,dy
\\
&=\int_n^{\infty}\frac1{\sqrt{2\pi}}\, \frac1y\,\sqrt{4\pi\log y}\,
\frac1{2\sqrt{2\log y- \log(4\pi\log y)}}\Big[\frac2y-\frac{1}{y\log
y}\Big]\,dy
\\
&=\int_n^{\infty}\frac1{\sqrt{2}}\,\dfrac1{y^2}\,\frac{2\log
y-1}{\sqrt{\big(2\log y-\log(4\pi\log y)\big)\log y} }\,dy.
\end{align*}
To prove that this last term is greater than
$\int_n^{\infty}\frac1{y^2}\,dy$ we should prove that for any $y\ge
5$, $$ \frac{2\log y-1}{\sqrt{\big(2\log y-\log(4\pi\log y)\big)\log
y}}>\sqrt{2}.$$ By  the change of variables $u=\log y$ and
after squaring the two terms of the inequality and some
simplifications, we have to show that, for $u\ge \log 5$,
$$2u\log(4\pi u)>4u-1$$
that is the same that
$$g(u):=\log(4\pi u)-2+\frac1{2u}>0,$$
for $u\ge \log 5$. And this  is due to the fact that $g(\log 5)>0$,
and  $ g'(u)>0$ for $ u>\log 5$.

\end{proof}

In several parts of this work we will get the rate of convergence of
$\Phi^n(a_nx+b_n)$ to $\Lambda(x)$ in terms of $b_n^2$, and later we
translate it in terms of $\log n$. To this end, we  use the
following result:

\begin{proposition}\label{p5}
For any $n_0\ge 3$ and any $n> n_0$ the following inequality is
satisfied:
\begin{equation*}
b_n^2> K(n_0)\, \log n,\quad\mbox{with}\quad
K(n_0)=\dfrac{b_{n_{0}}^2}{\log n_0}.
\end{equation*}
\end{proposition}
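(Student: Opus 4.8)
The claim is equivalent to the assertion that the function $g(n):=b_n^2/\log n$ is strictly increasing for $n\ge 3$: since $K(n_0)=g(n_0)$, the desired bound $b_n^2>K(n_0)\log n$ is precisely $g(n)>g(n_0)$. As $n>n_0\ge 3$ forces $b_n\ge b_{n_0}\ge b_3>0$ (recall $b_2=0$ and that $t\mapsto\Phi^{-1}(1-1/t)$ is increasing), the plan is to prove monotonicity of $g$ on the whole half-line $[3,\infty)$, treating $n$ as a continuous variable $t$.

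Write $b(t)=\Phi^{-1}(1-1/t)$, so that $\Phi(b(t))=1-1/t$ and hence $1-\Phi(b(t))=1/t$. Differentiating gives $\phi(b(t))\,b'(t)=1/t^2$, that is $b'(t)=1/\big(t^2\phi(b(t))\big)>0$. Therefore
\[
g'(t)=\frac{2b(t)\,b'(t)\log t-b(t)^2/t}{(\log t)^2},
\]
and, since $b(t)>0$, the sign of $g'(t)$ is that of $2b'(t)\log t-b(t)/t$. Substituting $b'(t)$ and using $1-\Phi(b(t))=1/t$, a short computation shows that $g'(t)>0$ is equivalent to
\[
2\log t>\frac{b(t)\,\phi(b(t))}{1-\Phi(b(t))}.
\]

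To bound the right-hand side I use the classical lower Mills inequality $\ac(x)=\frac{1-\Phi(x)}{\phi(x)}>\frac{x}{1+x^2}=\af(x)$ for $x>0$, which can be obtained by the same tail-integral comparison employed in the proof of Proposition~\ref{p4}. It gives $\frac{b\,\phi(b)}{1-\Phi(b)}<b\cdot\frac{1+b^2}{b}=1+b^2$, so it suffices to establish
\[
2\log t-b(t)^2>1,\qquad t\ge 3,
\]
i.e. $b(t)^2<2\log t-1$. Because $\Phi$ is increasing and $b(t)=\Phi^{-1}(1-1/t)$, this is in turn equivalent to $1-\Phi\big(\sqrt{2\log t-1}\,\big)<1/t$.

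This last inequality is the crux, but it closes cleanly. Setting $z=\sqrt{2\log t-1}$ and using the upper Mills inequality $1-\Phi(z)<\frac1z\phi(z)=\ah(z)\phi(z)$, it is enough to check $\frac{1}{z\sqrt{2\pi}}\,e^{-z^2/2}\le e^{-1/2}e^{-z^2/2}$, equivalently $z\ge\sqrt{e/(2\pi)}\approx 0.658$; and since $z=\sqrt{2\log t-1}\ge\sqrt{2\log 3-1}\approx 1.09$ for $t\ge 3$, this holds. Unwinding $e^{-1/2}e^{-z^2/2}=e^{-1/2}e^{-(2\log t-1)/2}=1/t$ then yields $1-\Phi(z)<1/t$, completing the last step and hence the monotonicity of $g$. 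The only delicate point is verifying that the threshold $z\ge\sqrt{e/(2\pi)}$ is met on all of $[3,\infty)$, which reduces to the single numeric inequality $2\log 3-1>e/(2\pi)$; everything else is just the interplay of the two elementary Mills bounds $\af(x)<\ac(x)<\ah(x)$.
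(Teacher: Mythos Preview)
Your proof is correct and takes a genuinely different route from the paper's argument. The paper also notes the equivalence with monotonicity of $b_n^2/\log n$, but then proves it \emph{indirectly}: it shows that for every $K\in(0,2)$ the curves $x\mapsto\Phi^{-1}(1-1/x)$ and $x\mapsto\sqrt{K\log x}$ intersect exactly once, via a study of $M(y)-N(y)=e^{-y^2/K}-(1-\Phi(y))$ and the zeros of the ratio $M'/N'$. Your argument instead differentiates $g(t)=b(t)^2/\log t$ directly, reduces $g'(t)>0$ to $2\log t>b\phi(b)/(1-\Phi(b))$, and then closes with the two-sided Mills bounds $\af<\ac<\ah$ already used elsewhere in the paper: the lower bound turns the question into the sharpened inequality $b(t)^2<2\log t-1$, and the upper bound verifies this sharpening by the clean numeric check $2\log 3-1>e/(2\pi)$. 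Your approach is shorter and more self-contained; the paper's approach, on the other hand, yields the somewhat stronger qualitative statement that the level curves $b(x)^2=K\log x$ are met exactly once for every $K\in(0,2)$, not only for the particular values $K=K(n_0)$.
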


\begin{proof}
Observe that the proposition is equivalent to say that the sequence
 $\{{b_n^2}/{\log n}\, ,n\ge 3\}$ is  increasing. Nevertheless,
we will prove the result in an indirect way. Specifically, we will
prove the following assertion:

\noindent { \it For any $K\in (0,2)$, the equation
\begin{equation}\label{eq1p5}
\Phi^{-1}(1-\frac1x)-\sqrt{K\log x}=0
\end{equation}
has a unique solution $ x_0>1$, and $
\Phi^{-1}(1-\frac1x)-\sqrt{K\log x}>0$
  for any $x>x_0$.}

Observe that if we take  $K=K(n_0)={b_{n_{0}}^2}/{\log n_0}$ (due to
Proposition \ref{p4},  the inequality  $0<K<2$ is satisfied), the
solution of the equation (\ref{eq1p5}) is precisely $x_0=n_0$, and
so, the proposition will follow from the assertion.

Composing with $\Phi$ and doing the change of variables
$y=\sqrt{K\log x}$, in order to prove the assertion, we must see
that the equation

\[
1-e^{-\frac{y^2}K}=\Phi(y),\] has a unique solution $ y_0>0$ and the
function $1-e^{-\frac{y^2}K}-\Phi(y)$ is positive for $y> y_0$.

Define
$$ M(y):=e^{-y^2/K}=\frac {2}K\int_y^\infty  x e^{-\frac{x^2}{K}}\,dx$$
and
$$N(y):=1-\Phi(y)=\frac {1}{\sqrt{2\pi}}\int_y^\infty
e^{-\frac{x^2}2}\,dx.$$ We have to prove that the equation
$M(y)-N(y)=0$ has a unique solution $ y_0>0$ and that for $y> y_0,$
$M(y)-N(y)<0$.
 To prove this, we study the
function $M(y)-N(y)$ for $y\ge 0.$ Notice that
\begin{equation}\label{mn}
\operatorname{sign}\left(\frac{M'(y)}{N'(y)}-1\right)=-\operatorname{sign}
(M'(y)-N'(y)).
\end{equation}
Therefore we introduce the function \[
g(y):=\frac{M'(y)}{N'(y)}=\frac{2\sqrt{2\pi}}{K}\,y\,
e^{\frac{K-2}{2K}\,y^2},\] and study the equation $g(y)=1,$ for
$y\ge0.$  It is not difficult to show that it has exactly two
solutions,  $y_1$ and $y_2$, and that
 \[0<y_1<\widetilde
y:=\sqrt{\frac{2-K}{K}}<y_2,\] where $\widetilde y$ is the unique
positive solution of $g'(y)=0.$

 Moreover $g(y)-1$ is positive in
$(y_1,y_2)$ and negative on $[0,y_1)\cup(y_2,\infty)$. Using
\eqref{mn}, we get that $M-N$ is increasing in
$[0,y_1)\cup(y_2,\infty)$ and decreasing in $(y_1,y_2)$.

Notice also that $M(0)-N(0)=1-1/2>0$. Joining all the information we
get that $M-N$ has at most one zero, $y=y^*,$ in $(0,y_2]$ and, if
exits, it is in $(y_1,y_2]$. In fact, since
\[
M'(x)<N'(x)\quad\mbox{for}\quad x> y_2,
\]
integrating both sides from $y$ to infinity, we obtain that
$M(y)-N(y)<0$ for all $y> y_2$.

In short, $M-N$ has exactly one zero $y_0=y^*$ in $[0,\infty)$, this
zero belongs to the interval $(y_1,y_2]$ and moreover $M-N$ is
negative for $y>y_0.$ This fact finishes the proof of the assertion.
\end{proof}

\subsection{The canonical auxiliary function and its reciprocal}
\label{sub:canonical}
The canonical auxiliary function
$$\ac(t)=\frac{1-\Phi(t)}{\phi(t)},\quad  t>0,$$
is known as Mills ratio and  enjoys nice properties. In Baricz \cite{bar} or Gasull and
Utzet \cite{gu}
it is proved that it is completely monotone, that means, the
derivatives alternate their signs: $\ac(t)>0$, and for $n\ge 1$,
$$(-1)^n\ac^{(n)}(t)>0,\ \text{for $t>0$}.$$
In particular $\ac$ is strictly decreasing and strictly convex. It
is also known how to construct two sequences of rational functions
 $\{P_n(t)/Q_n(t), \, n \ge 0\}$ with nonnegative integer
coefficients and numerators and denominators with increasing
degrees, such that for all $t>0$,
\begin{equation*} \frac{Q_{n+1}(t)}{P_{n+1}(t)}<\ac(t)<\frac{Q_{n}(t)}{P_{n}(t)},
\end{equation*}
see again \cite{gu} or \cite{p}.  We will use
\begin{equation}\label{fitesA}
\frac{t}{t^2+1}<\ac(t)<\frac{t^2+2}{t^3+3t}<\frac{1}{t}.
\end{equation}

Denote by $V(t)$ the reciprocal of the canonical auxiliary  function
\begin{equation}
\label{reciprocal}
V(t)=\frac{1}{\ac(t)}
\end{equation}
Since $\ac$ is strictly decreasing,  $V$ is strictly increasing.
Moreover, the bounds for $\ac$ give bounds  for~$V$. In particular,
from \eqref{fitesA},  for $t>0$,
\begin{equation}
\label{fitesV}
 t< V(t)<t+\frac{1}{t}.
\end{equation}

The function $V(t)$ also provides a very useful way to express the
function $\Phi^n (A(b_n)\, x+b_n)-\Lambda(x)$, which  is a main
object in this paper.

\begin{proposition}\label{dife} Set $ b_n=\Phi^{-1}(1-1/n)$ and
let $\{ a'_n,\, n\ge 1\}$ be an arbitrary sequence of strictly
positive numbers.  For every $x\in \R$ it holds that
\begin{equation}\label{lamlam}
\Phi^n (a'_n x+b_n)-\Lambda(x)=e^{-n
S_n(x)}\Big(\Lambda\big(I_n(x)\big)-\Lambda(x)\Big)+\Lambda(x)\left(e^{-nS_n(x)}-1
\right),
\end{equation}
where
\begin{equation}\label{condis}
I_n(x)=\int_{b_n}^{a'_nx+b_n} V(t) \, dt,\quad
0<S_n(x)<\frac{C_n^2(x)}{2(1-C_n(x))}\quad\mbox{and}\quad
C_n(x)=\frac1n e^{-I_n(x)}.
\end{equation}
\end{proposition}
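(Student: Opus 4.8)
The plan is to rewrite $\Phi^n(a'_n x + b_n)$ in the form $e^{-nS_n(x)}\,\Lambda(I_n(x))$ and then peel off $\Lambda(x)$ by a purely algebraic identity. The observation that drives everything is that the reciprocal canonical auxiliary function is, up to sign, a logarithmic derivative of the survival function: since $V(t)=1/\ac(t)=\phi(t)/(1-\Phi(t))$ and $\frac{d}{dt}\bigl(1-\Phi(t)\bigr)=-\phi(t)$, we have $V(t)=-\frac{d}{dt}\log\bigl(1-\Phi(t)\bigr)$. First I would fix $x\in\R$, set $w=a'_nx+b_n$, and integrate this relation to obtain
\begin{equation*}
I_n(x)=\int_{b_n}^{w}V(t)\,dt=\log\frac{1-\Phi(b_n)}{1-\Phi(w)}=\log\frac{1}{n\bigl(1-\Phi(w)\bigr)},
\end{equation*}
where the last step uses the defining property $1-\Phi(b_n)=1/n$ of $b_n=\Phi^{-1}(1-1/n)$. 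Solving for the survival probability yields
\begin{equation*}
1-\Phi(w)=\frac1n\,e^{-I_n(x)}=C_n(x),
\end{equation*}
which identifies $C_n(x)$ with $1-\Phi(a'_nx+b_n)$ and in particular shows $0<C_n(x)<1$ for every real $x$.

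Next I would expand the $n$th power. Writing $\Phi^n(w)=\bigl(1-C_n(x)\bigr)^n=\exp\{n\log(1-C_n(x))\}$ and using the convergent expansion $-\log(1-C_n)=C_n+\sum_{k\ge2}C_n^k/k$, valid because $0<C_n<1$, I would \emph{define}
\begin{equation*}
S_n(x):=-\log\bigl(1-C_n(x)\bigr)-C_n(x)=\sum_{k\ge2}\frac{C_n(x)^k}{k}.
\end{equation*}
Then $\Phi^n(w)=e^{-nC_n(x)}\,e^{-nS_n(x)}$, and since $nC_n(x)=e^{-I_n(x)}$ the first factor is $\exp\{-e^{-I_n(x)}\}=\Lambda(I_n(x))$. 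Hence $\Phi^n(a'_nx+b_n)=e^{-nS_n(x)}\,\Lambda(I_n(x))$. Subtracting $\Lambda(x)$ and inserting $\pm e^{-nS_n(x)}\Lambda(x)$ gives
\begin{equation*}
\Phi^n(w)-\Lambda(x)=e^{-nS_n(x)}\bigl(\Lambda(I_n(x))-\Lambda(x)\bigr)+\Lambda(x)\bigl(e^{-nS_n(x)}-1\bigr),
\end{equation*}
which is exactly \eqref{lamlam}.

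It then remains only to bound $S_n(x)$. Every term of the defining series is positive, so $S_n(x)>0$, and using $1/k\le 1/2$ for $k\ge2$ I would estimate
\begin{equation*}
S_n(x)<\frac12\sum_{k\ge2}C_n(x)^k=\frac{C_n(x)^2}{2\bigl(1-C_n(x)\bigr)},
\end{equation*}
which is the bound stated in \eqref{condis}. I expect no genuine obstacle beyond bookkeeping: the whole argument is mechanical once the logarithmic-derivative identity for $V$ is spotted. The single point demanding care is that both the series manipulation and the final geometric-series bound depend on the two-sided inequality $0<C_n(x)<1$, which is why I would establish $C_n(x)=1-\Phi(a'_nx+b_n)$ explicitly at the outset rather than treat $C_n$ as a formal symbol; this works uniformly for all real $x$ (including $x<0$, where $I_n(x)<0$) because $1-\Phi$ is strictly positive on all of $\R$.
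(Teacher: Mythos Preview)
Your proof is correct and follows essentially the same route as the paper: identify $1-\Phi(a'_nx+b_n)=\tfrac1n e^{-I_n(x)}$ via the logarithmic-derivative relation for $V$, expand $n\log(1-C_n(x))$ to isolate $S_n(x)$, and then add and subtract $e^{-nS_n(x)}\Lambda(x)$. The only cosmetic difference is that the paper quotes the remainder bound $\log(1-u)=-u-r(u)$ with $0\le r(u)\le u^2/(2(1-u))$ directly, whereas you derive it from the Taylor series and a geometric-series majorization---which in fact makes the strict inequalities in \eqref{condis} slightly more transparent.
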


\begin{proof} Notice that for $y\in\R$,
\begin{equation*}
1-\Phi(y)= \exp\big(\log(1-\Phi(y))\big)=\exp\Big\{\int_{-\infty}^y
\frac{-\phi(t)}{1-\Phi(t)}\, dt\Big\}=\exp\Big\{-\int_{-\infty}^y
V(t)\, dt\Big\}.\end{equation*} Then
\begin{align}
\label{abanslemma1}
1&-\Phi(a'_nx+b_n)=\exp\Big\{-\int_{-\infty}^{b_n} V(t)\, dt\Big\}\exp\Big\{-\int_{b_n}^{a'_nx+b_n} V(t)\, dt\Big\} \notag\\
&=
\exp\left\{\left.\log\big(1-\Phi(t)\big)\right|_{-\infty}^{b_n}\right\}\exp\Big\{-\int_{b_n}^{a'_nx+b_n}
V(t)\, dt\Big\} = \frac{1}{n} \,e^{-I_n(x)},
\end{align}
where the last equality follows from the definition of  $b_n$.
Notice also that, by \eqref{abanslemma1},
$0<\exp\big(-I_n(x)\big)/n<1.$

The following  formula is well-known and  was already used by Hall
in \cite{Hall79}. For $u\in (-1,1)$,
\begin{equation*}
\label{des-hall} \log(1-u)=-u-r(u)\quad\mbox{with}\quad 0\le r(u)\le
\frac{u^2}{2(1-u)}.
\end{equation*}
Then, from (\ref{abanslemma1}),
 \begin{equation}\label{ss}
\log\,\Phi^n(a'_nx+b_n)=n\, \log\Big(1-\frac{1}{n} \,
e^{-I_n(x)}\Big) =-e^{-I_n(x)}-nS_n(x),
 \end{equation}
 where $S_n(x)$ satisfies the
 conditions given in~\eqref{condis}.
 Hence,
\[\Phi^n(a'_nx+b_n)-\Lambda(x)=
 e^{-n
S_n(x)}\Lambda\big(I_n(x)\big)-\Lambda(x),
\]
and~\eqref{lamlam}  follows adding and subtracting the  term $ e^{-n
S_n(x)}\Lambda(x)$.
\end{proof}

\section{Proof of Theorem \ref{maintheorem}}
To short the notations in this proof, we  write $a_n$ instead of
$a_n^\circ=\af(b_n)$. We will consider separately the cases $x\ge0$
and $x<0,$ where the rate of convergence is analyzed; later both
cases are joined to get a global rate of convergence. In
Subsection~\ref{sub_other_an} there are some comments about the
other norming constants $\ah(b_n)$ and $\ac(b_n)$.
\subsection{ Case $x\ge0.$}

We  prove the following proposition:

\begin{proposition}\label{pos} Given $n_0\ge3,$ for all $n\ge n_0$ it
holds that
\begin{equation*}
\sup_{x\ge 0}\vert \Phi^n \big(a_n\,
x+b_n\big)-\Lambda(x)\vert <\frac{C^+(n_0)}{\log n},\end{equation*}
where \begin{equation*}
C^+(n_0)=\Big(\frac{1}{e\,b_{n_0}^2}+\frac{1}{2(n_0-1)}\Big)\log
(n_0).
\end{equation*}
\end{proposition}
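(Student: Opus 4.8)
The plan is to feed the choice $a_n=\af(b_n)=b_n/(1+b_n^2)$ (recall \eqref{auxFT}) into the exact identity of Proposition~\ref{dife},
\[
\Phi^n(a_nx+b_n)-\Lambda(x)=e^{-nS_n(x)}\big(\Lambda(I_n(x))-\Lambda(x)\big)+\Lambda(x)\big(e^{-nS_n(x)}-1\big),
\]
and to bound the two summands separately, each by a quantity of order $1/b_n^2$ or $1/(n-1)$; a final appeal to Proposition~\ref{p5} then converts $1/b_n^2$ into $1/\log n$. Throughout I use that for $x\ge0$ the upper endpoint $a_nx+b_n\ge b_n$, so $I_n(x)\ge0$ and $C_n(x)=n^{-1}e^{-I_n(x)}\le 1/n$. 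The second summand is the easy one: from \eqref{condis}, $0<S_n(x)<C_n(x)^2/\big(2(1-C_n(x))\big)$, and inserting $1-C_n(x)\ge (n-1)/n$ together with $nC_n(x)^2=n^{-1}e^{-2I_n(x)}\le 1/n$ gives $nS_n(x)<1/(2(n-1))$. Since $0<\Lambda\le1$ and $|e^{-nS_n}-1|\le nS_n$, the second summand stays strictly below $1/(2(n-1))$, uniformly in $x\ge 0$.

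For the first summand, as $e^{-nS_n(x)}\le1$ it suffices to control $|\Lambda(I_n(x))-\Lambda(x)|$. Using the bracketing $t<V(t)<t+1/t$ from \eqref{fitesV}, I would sandwich $I_n(x)$ between $Q(x):=\tfrac12\big((a_nx+b_n)^2-b_n^2\big)$ and $Q(x)+\log\!\big(1+a_nx/b_n\big)$. A direct computation with $a_n=b_n/(1+b_n^2)$ gives $Q(x)-x=-\tfrac{x}{1+b_n^2}+\tfrac{b_n^2}{2(1+b_n^2)^2}x^2$, a parabola in $x$ whose minimum over $x\ge0$ equals $-1/(2b_n^2)$; hence $I_n(x)-x>-1/(2b_n^2)$ for all $x\ge0$.

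The \emph{main obstacle} is that $I_n(x)-x$ is \emph{not} bounded above (it grows like $x^2$), so the crude estimate $|\Lambda(I_n)-\Lambda(x)|\le \tfrac1e|I_n-x|$ coming from $\max\Lambda'=1/e$ is useless for large $x$. I would resolve this by splitting on the sign of $I_n(x)-x$ and exploiting the decay of $\Lambda'$. When $I_n(x)\le x$ the lower bound gives $|I_n(x)-x|\le 1/(2b_n^2)$, and the global bound $\Lambda'\le 1/e$ yields $|\Lambda(I_n)-\Lambda(x)|\le 1/(2eb_n^2)$. When $I_n(x)>x\ge0$ I use instead that $\Lambda'$ is decreasing on $[0,\infty)$, so the intermediate point lies in $[x,I_n(x)]$ where $\Lambda'\le\Lambda'(x)\le e^{-x}$; combined with $\log(1+u)\le u$, which sharpens the sandwich to $I_n(x)-x<\tfrac{x^2}{2(1+b_n^2)}$, and with $\max_{x\ge0}x^2e^{-x}=4e^{-2}$, this gives $|\Lambda(I_n)-\Lambda(x)|\le e^{-x}(I_n(x)-x)<\tfrac{2}{e^2(1+b_n^2)}$. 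Since $(2-e)b_n^2<e$, the last quantity is below $1/(eb_n^2)$, so in either case $\sup_{x\ge0}|\Lambda(I_n(x))-\Lambda(x)|<1/(eb_n^2)$.

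Adding the two bounds yields $\sup_{x\ge0}|\Phi^n(a_nx+b_n)-\Lambda(x)|<\tfrac{1}{eb_n^2}+\tfrac{1}{2(n-1)}$, and it remains to rescale to $1/\log n$. Proposition~\ref{p5} gives $b_n^2\ge \tfrac{b_{n_0}^2}{\log n_0}\log n$ for $n\ge n_0$, so $\tfrac{1}{eb_n^2}\le \tfrac{\log n_0}{eb_{n_0}^2}\cdot\tfrac1{\log n}$; and since $\tfrac{\log n}{n-1}$ is decreasing for $n\ge3$ (its derivative has numerator $1-\tfrac1n-\log n<0$ there), we get $\tfrac{1}{2(n-1)}\le \tfrac{\log n_0}{2(n_0-1)}\cdot\tfrac1{\log n}$. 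Summing these two estimates produces exactly $C^+(n_0)/\log n$, which closes the proof.
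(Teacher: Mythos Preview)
Your proof is correct and follows the same overall strategy as the paper: you invoke Proposition~\ref{dife}, bound the $S_n$-term by $1/(2(n-1))$ exactly as the paper does, split the $\Lambda$-term according to the sign of $I_n(x)-x$, and finish with Proposition~\ref{p5} and the monotonicity of $\log y/(y-1)$.

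The only genuine difference is in the sub-case $I_n(x)\le x$. The paper keeps the linear estimate $x-I_n(x)\le x/(1+b_n^2)$ (dropping the quadratic term) and then needs the slightly more intricate chain $\Lambda'(I_n(x))(x-I_n(x))\le e^{x-I_n(x)}e^{-x}(x-I_n(x))$ together with $\max_{y\ge0}ye^{-y}=1/e$ to reach $1/(eb_n^2)$. You instead exploit that the parabola $Q(x)-x$ has global minimum $-1/(2b_n^2)$, so $x-I_n(x)<1/(2b_n^2)$ uniformly, and the crude bound $\Lambda'\le1/e$ already gives $1/(2eb_n^2)$. This is a genuine simplification (and yields a sharper constant in that sub-case, though Case~2 remains the bottleneck and the final $C^+(n_0)$ is unchanged). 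Your treatment of $I_n(x)>x$ is essentially identical to the paper's.
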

\begin{proof}  By Proposition~\ref{dife}, for $x\ge0$, we have that
\begin{align}
\label{des:propo}
\big\vert\Phi^n (a_n x+b_n)-\Lambda(x)\big\vert&
\le e^{-n
S_n(x)}\Big\vert\Lambda\big(I_n(x)\big)-\Lambda(x)\Big\vert+\Lambda(x)\big\vert
e^{-nS_n(x)}-1
\big\vert\notag\\
&\le\Big\vert\Lambda\big(I_n(x)\big)-\Lambda(x)\Big\vert+\big\vert
e^{-nS_n(x)}-1 \big\vert.
\end{align}
We first study the term $\big\vert e^{-nS_n(x)}-1\big\vert$. From
(\ref{condis}), $0<C_n(x)\le{1}/{n}$ (now $I_n(x)\ge0$) and hence,
$$0<S_n(x)<\frac{1}{2n(n-1)}.$$
Thus, since when $y\ge0$, $ 1-e^{-y}\le y$, we get
\begin{align}\label{part1}
\big\vert
e^{-nS_n(x)}-1\big\vert&=1-e^{-nS_n(x)}<nS_n(x)<\frac{1}{2(n-1)}\nonumber\\&=\frac{\log
n}{2(n-1)}\frac 1{\log n}\le \frac{\log n_0}{2(n_0-1)}\frac 1{\log
n},\end{align} because the function $\log y/(y-1)$ is decreasing.
Notice that the above inequality gives the second term of the right
hand side of the statement.

To bound the other term we will study separately the cases whether
$I_n(x)<x$ or $I_n(x)\ge x.$ It can be seen that both situations
occur.

\smallskip

\noindent{\bf 1.} Case $I_n(x)<x$. Here,
\begin{align}\label{nnn}
\Big\vert\Lambda\big(I_n(x)\big)-\Lambda(x)\Big\vert&=
\Lambda(x)-\Lambda\big(I_n(x)\big)\le\Lambda'\big(I_n(x)\big)\big(x-I_n(x)\big)\nonumber\\
&=\Lambda\big(I_n(x)\big)e^{-I_n(x)}\big(x-I_n(x)\big)\le
e^{x-I_n(x)}e^{-x}\big(x-I_n(x)\big).
\end{align}
where we have used that for $x>0,$ $\Lambda(x)$ is increasing,
$\Lambda'(x)$ is decreasing,  the Mean Value Theorem and that
$\Lambda\big(I_n(x)\big)\le1$.

At this point observe that since $a_n=b_n/(b_n^2+1)$,
\[
0<x-I_n(x)\le x-\int_{b_n}^{a_nx+b_n} t \,
dt=x-\frac{(a_nx)^2}2-a_nb_nx\le (1-a_nb_n)x=\frac x{b_n^2+1},
\]
where we utilize the bound $V(t)> t$ given in \eqref{fitesV}. Hence,
plugging the above inequality in~\eqref{nnn},
\begin{align}\label{part2}
\Big\vert\Lambda\left(I_n(x)\right)-\Lambda(x)\Big\vert&\le e^{\frac
x{b_n^2+1}}e^{-x}\frac x{b_n^2+1} = e^{-\frac {b_n^2
x}{b_n^2+1}}\frac x{b_n^2+1}\nonumber\\
& = e^{-\frac {b_n^2 x}{b_n^2+1}}\frac
{b_n^2x}{b_n^2+1}\frac1{b_n^2}\le \max_{y\in[0,\infty)}\left\{
e^{-y}y\right\}\frac 1{b_n^2}=\frac1{eb_n^2}.
\end{align}

\smallskip

\noindent{{\bf 2.} Case $I_n(x)\ge x$.} Here,
\begin{align}\label{nnn2}
\Big\vert\Lambda\big(I_n(x)\big)-\Lambda(x)\Big\vert&=
\Lambda\big(I_n(x)\big)-\Lambda(x)\le\Lambda'(x)\big(I_n(x)-x\big)\nonumber\\
&=\Lambda(x)e^{-x}\big(I_n(x)-x\big)\le e^{-x}\big(I_n(x)-x\big) ,
\end{align}
where we have used the same properties as above. We proceed also
as in the previous situation, and moreover we use that
 when $y\ge0$, then  $\log(1+y)\le y$. Given that $V(t)<t+1/t$ (see \eqref{fitesV}),
\begin{align}\label{bb}
 0&\le I_n(x)-x\le
\frac{1}{2}\,a_n^2x^2+a_nb_n x+
\log\Big(\frac{a_n}{b_n}\,x+1\Big)-x\nonumber\\
&\le \frac{b_n^2}{2(b_n^2+1)^2}\, {x^2}+ \frac{b_n^2}{b_n^2+1}\,
x+\frac{1}{b_n^2+1}\, x-x= \frac{b_n^2}{2(b_n^2+1)^2}\,
{x^2}<\frac{1}{2 b_n^2}\, x^2.
\end{align}
Hence, from~\eqref{nnn2},
\begin{equation}\label{part3}
\Big\vert\Lambda\big(I_n(x)\big)-\Lambda(x)\Big\vert\le
\frac{x^2}{2}\,e^{-x}\,\frac{1}{b_n^2}<
 \frac{2}{e^2b_n^2}.
\end{equation}
The proposition follows joining \eqref{part1}, \eqref{part2} and
\eqref{part3}, using that $1/e>2/e^2$ and applying
Proposition~\ref{p5}.
\end{proof}

\bigskip

\subsection{ Case $x<0.$}

\begin{proposition}\label{neg} Given $n_0\ge3,$ for all $n\ge n_0$ it
holds that
\begin{equation*}
\sup_{x<0}\vert \Phi^n \big(a_n\, x+b_n\big)-\Lambda(x)\vert
<\frac{C^-(n_0)}{\log n},\end{equation*} where
\begin{equation*}
C^-(n_0)=\begin{cases}1,&\quad\mbox{when}\quad n_0\le 15\\
 \Big(\dfrac{2}{3b_{n_0}^2} +\dfrac{1}{\sqrt{e}n_0}\Big)\log (n_0)&\quad\mbox{when}\quad n_0\ge
16.\end{cases}
\end{equation*}
\end{proposition}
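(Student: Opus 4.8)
The plan is to feed the decomposition \eqref{lamlam} of Proposition~\ref{dife} with $a'_n=a_n$, as in the case $x\ge 0$, but now to exploit sign information. The key preliminary observation is that for $x<0$ one has $x<I_n(x)<0$: indeed $I_n(0)=0$ and $I_n$ is increasing, while $\frac{d}{dx}\big(I_n(x)-x\big)=a_nV(a_nx+b_n)-1<0$ for $x\le 0$, since $V$ is increasing and $a_nV(b_n)=\af(b_n)/\ac(b_n)<1$ by \eqref{fitesA}. As $\Lambda$ is increasing this yields $\Lambda(x)<\Lambda\big(I_n(x)\big)$, so the two summands in \eqref{lamlam} keep a fixed sign and
$$\big|\Phi^n(a_nx+b_n)-\Lambda(x)\big|\le \big(\Lambda(I_n(x))-\Lambda(x)\big)+\Lambda(x)\big(1-e^{-nS_n(x)}\big)=:\text{A}+\text{B}.$$

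For the term $\text{B}$ I would use $\Lambda(x)\le\Lambda(I_n(x))=e^{-w}$ with $w:=e^{-I_n(x)}=nC_n(x)\ge 1$, together with $1-e^{-nS_n}\le nS_n<\frac{w^2}{2(n-w)}$ from \eqref{condis}. On the range $w\le n/2$ (that is, $a_nx+b_n\ge 0$) this gives $\text{B}\le w^2e^{-w}/n$, whose maximum over $w$ is an absolute constant, producing the summand $1/(\sqrt e\,n)$; on the complementary range $w>n/2$ one bounds $1-e^{-nS_n}\le 1$ and uses $\Lambda(x)\le e^{-w}\le e^{-n/2}$, which is negligible. This is the easy half.

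The substance is the term $\text{A}$. Using only $V(t)>t$ from \eqref{fitesV} (valid for every real $t$, since $V>0$), integration gives $I_n(x)\le \ol I_n(x):=\tfrac12 a_n^2x^2+a_nb_nx$, and hence, by monotonicity of $\Lambda$, $\text{A}\le \Lambda(\ol I_n(x))-\Lambda(x)$, an expression in the single variable $x$. For $-2(1+b_n^2)\le x<0$ one checks $\ol I_n(x)\le 0$, so $\Lambda'$ is increasing on $[x,\ol I_n(x)]$ and the mean value theorem gives $\text{A}\le \Lambda'(\ol I_n(x))\big(\ol I_n(x)-x\big)$. A direct computation with $a_n=b_n/(1+b_n^2)$ bounds the gap by $\ol I_n(x)-x=\tfrac12a_n^2x^2-\tfrac{x}{1+b_n^2}<\tfrac{x^2-2x}{2b_n^2}$, so everything reduces to the one-variable inequality
$$\sup_{x<0}\ \Lambda'(\ol I_n(x))\,\big(x^2-2x\big)\le \tfrac43,\qquad\text{whence}\qquad \text{A}<\tfrac{2}{3b_n^2}.$$
For the remaining range $x<-2(1+b_n^2)$ one has $a_nx+b_n<-b_n$, so $w=nC_n(x)> n-1$ and $\text{A}\le\Lambda(I_n(x))=e^{-w}<e^{-(n-1)}$ is negligible.

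The main obstacle is precisely this one-variable inequality. Writing $\Lambda'(\ol I_n)=e^{-\ol I_n}e^{-e^{-\ol I_n}}$, the factor $e^{-e^{-\ol I_n}}$ supplies a double-exponential decay that defeats the quadratic growth of $x^2-2x$, so the product stays bounded; the delicate point is that the supremum is attained at a balance value $e^{-I_n(x)}=O(1)$ where neither the mean value estimate nor the crude bound $\Lambda(I_n)\le e^{-w}$ is individually tight, and some care is needed to extract the clean constant $\tfrac43$ rather than the (smaller) sharp value. Once $\text{A}$ and $\text{B}$ are combined into $\tfrac{2}{3b_n^2}+\tfrac{1}{\sqrt e\,n}$, I would convert $1/b_n^2$ through Proposition~\ref{p5} (which gives $b_n^2>\tfrac{b_{n_0}^2}{\log n_0}\log n$) and $\tfrac{\log n}{n}$ through its monotonicity, obtaining the stated $C^-(n_0)$ for $n_0\ge16$. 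For $n_0\le 15$ the bound $C^-(n_0)=1$ follows from the same estimate for $n\ge 16$ (where $\big(\tfrac{2}{3b_n^2}+\tfrac1{\sqrt e n}\big)\log n<1$) together with a direct check of the finitely many remaining values $n\in[n_0,15]$.
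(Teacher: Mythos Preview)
Your treatment of the term $\text{B}$ is correct and essentially the paper's argument. The gap is in the term $\text{A}$: the ``one-variable inequality'' $\sup_{x<0}\Lambda'(\ol I_n(x))(x^2-2x)\le 4/3$ is simply false. The function $\ol I_n(x)=\tfrac12 a_n^2x^2+a_nb_nx$ is a downward parabola with zeros at $x=0$ and $x=-2(1+b_n^2)$; hence as $x$ approaches the left endpoint of your range one has $\ol I_n(x)\to 0$, so $\Lambda'(\ol I_n(x))\to 1/e$, while $x^2-2x\sim 4(1+b_n^2)^2$. The product is therefore of order $b_n^4$, not bounded by $4/3$, and after dividing by $2b_n^2$ your bound on $\text{A}$ is of order $b_n^2$, not $1/b_n^2$. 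Concretely, for $n=16$ (so $b_n^2\approx 2.3$) one computes $\Lambda'(\ol I_n(x))(x^2-2x)\approx 1.5$ at $x=-2$, $\approx 3.5$ at $x=-4$, $\approx 16$ at $x=-6$.

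The underlying reason your bound fails is that by evaluating $\Lambda'$ at the right endpoint $\ol I_n(x)$ you have discarded the factor $\Lambda(x)=e^{-e^{-x}}$, which is the only thing available to defeat the polynomial growth of $x^2-2x$. The paper keeps this factor, writing $\Lambda(I_n)-\Lambda(x)=\Lambda(x)\big(\Lambda(I_n)/\Lambda(x)-1\big)$, and---crucially---restricts this estimate to the narrow window $x\in(-1.25\log b_n,0)$, on which the relation $b_n^2\ge e^{-8x/5}$ supplies exactly the extra decay needed to absorb the $e^{-x}(x^2/2-x)$ term. On the remaining range $x\le -1.25\log b_n$ the decomposition \eqref{lamlam} is abandoned altogether and $\Lambda(x)$ and $\Phi^n(a_nx+b_n)$ are bounded \emph{separately}, each by $2/(3b_n^2)$. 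The paper remarks that the choice of the threshold $-1.25\log b_n$ is essential; your cut at $-2(1+b_n^2)$ lies far too deep, and the intermediate region is precisely where your Mean Value bound blows up.
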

\begin{proof} First notice that for $x<0$, $\Lambda(x)<\Lambda(0)<1/e$, and
$$\Phi^n\big(a_nx+b_n\big)<\Phi^n\big(b_n\big)=\Big(1-\frac{1}{n}\Big)^n<\frac{1}{e}.$$
Hence, for\, $3\le n\le 15$,
\begin{equation}\label{f7}
\sup_{x<0}\Big\vert
\Lambda(x)-\Phi^n\big(a_nx+b_n\big)\Big\vert<\frac{1}{e}<\frac{1}{\log
n}.\end{equation} From now on we assume $n\ge 16$. For convenience,
we divide the values of $x$ according whether
\[
x\in(-\infty,-{b_n}/{a_n}),\quad x\in[-{b_n}/{a_n},-1.25\log b_n]
\quad\mbox{or}\quad x\in(-1.25\log b_n,0).
\]
 Notice that for
$n\ge16$, $-b_n/a_n<-1.25\log b_n.$ We remark that in our approach
the choice of the point $-1.25\log b_n$ is essential to obtain sharp
bounds.

\bigskip

\noindent{\bf 1.} Case $x\in(-\infty,-{b_n}/{a_n})$. Here
we will bound separately $\Lambda(x)$ and  $\Phi^n(a_nx+b_n)$.
Since $a_nx+b_n<0$, on the one hand,
$\Phi\big(a_nx+b_n\big)<\Phi(0)=0.5,$ and therefore
\begin{equation*}
0<\Phi^n(a_nx+b_n)<\frac 1 {2^n}.
\end{equation*}
On the other hand,
\begin{equation*}
0<\Lambda(x)<\Lambda\left(-\frac{a_n}{b_n}\right)
=\Lambda\left(-(b_n^2+1)\right)=\exp\left\{ -e^{b_n^2+1}\right\}.
\end{equation*}
Joining the above inequalities we obtain that for
$x\in(-\infty,-{b_n}/{a_n})$,
\begin{equation}\label{ff1}
\Big\vert \Lambda(x)-\Phi^n\big(a_nx+b_n\big)\Big\vert\le \max\Big(
\Lambda(x),\Phi^n\big(a_nx+b_n\big)\Big)<\frac 1{2^n}
\end{equation}
where we have used that for $n\ge3,$
\[
\exp\left\{ -e^{b_n^2+1}\right\}\le \frac 1{2^n}.
\]
It is easy to see that this  inequality holds for $n\le5.$ To prove
it for $n\ge 6$, notice that it is equivalent to see that
\[
 e^{b_n^2+1}\ge (\log 2) n.
\]
Now, by Proposition~\ref{p4},
\[
e^{b_n^2+1}\ge e^{1+2\log n-\log(4\pi\log n)}=\frac{e\,n^2}{4\pi\log
n}.
\]
Hence, it suffices to prove that,  for $n\ge6$, ${e\,n^2}/(4\pi\log
n)\ge (\log 2)n$ and this result follows studying the function
$y/\log y$ and its derivatives.

Finally, since  for $y>1$ the function $(1/2)^y\log y$ is
decreasing,  inequality~\eqref{ff1} implies that
\begin{equation}\label{ff2}
\Big\vert \Lambda(x)-\Phi^n\big(a_nx+b_n\big)\Big\vert<\frac{\log
n_0}{2^{n_0}}\frac 1{\log n}.
\end{equation}

\bigskip

\noindent{\bf 2.} Case $x\in[-{b_n}/{a_n},-1.25\log n]$. As in the
first case we will bound separately $\Lambda(x)$ and
$\Phi^n(a_nx+b_n)$.

 We start studying $\Lambda(x).$ We get that
\begin{equation}\label{f4}
0\le\Lambda(x)\le \Lambda(-1.25\log b_n)<\Lambda(-\log b_n)\le
\frac{4}{e^2}\,\frac1{b_n^2}\le \frac{4 \log n_0}{e^2 b_{n_0}^2}\,\frac
1{\log n}.
\end{equation}
where we have used Proposition~\ref{p5} and that
\begin{equation}\label{eee}
\Lambda(-y)\le \frac4{e^2}\,e^{-2y}
\end{equation}
for all $y\ge0$. Notice that this inequality holds because
$\max_{z\ge0} \big(z^2e^{-z}\big)=4/e^2.$

Let us consider now the  term $\Phi^n(a_nx+b_n)$. Recall  that
by~\eqref{ss},
 \begin{equation}
\log\,\Phi^n(a_nx+b_n)=-\exp\Big\{-\int_{b_n}^{a_nx+b_n} V(t) \,
dt\Big\}-nS_n(x),
 \end{equation}
with $S_n(x)>0.$ Hence, \begin{equation}\label{f1}
\Phi^n(a_nx+b_n)\le e^{-\exp \Big\{\int_{a_n x+b_n}^{b_n} V(t) \,
dt\Big\}  }.
\end{equation}
By \eqref{fitesV} we get
\begin{equation*}\label{eeeee}
\int_{a_nx+b_n}^{b_n} V(t)\,dt\ge\int_{a_nx+b_n}^{b_n} t\,dt=-a_nb_n
x-a_n^2\frac{x^2}2:=g(x).
\end{equation*}
Notice that on the interval $[-b_n/a_n,-1.25\log b_n]$ the
function $g$ is decreasing and then for all $x$ in this interval,
$g(x)\ge g(-1.25\log b_n).$ Thus,
\begin{align}\label{f2}
\int_{a_nx+b_n}^{b_n} V(t)\,dt&\ge \frac 54a_nb_n\log b_n
-\frac{25}{32}a_n^2(\log b_n)^2\nonumber\\
&= \frac54\frac{b_n^2}{b_n^2+1}\log b_n- \frac{25
b_n^2}{32(b_{n}^2+1)^2}(\log b_{n})^2\nonumber\\
&\ge  \frac54\frac{b_{n}^2}{b_{n}^2+1}\log b_n-
\max_{y>1}\left(\frac{25 y^2}{32(y^2+1)^2}\log y\right)\log
b_n\nonumber\\&\ge \frac54\frac{b_{n}^2}{b_{n}^2+1}\log b_n-
\frac1{10}\log b_n=\frac{23b_n^2-2}{20(b_n^2+1)}\log b_n,
\end{align}
where we have used that
\begin{equation}\label{f8}
\max_{y\ge1}\left(\frac{25 y^2}{32(y^2+1)^2}\log
y\right)<\frac1{10},
\end{equation}
and that $b_n>1$ for $n\ge16.$ The above inequality follows studying
the function $h(y)= (y^2\log y)/(y^{ 2}+1)^2.$ In fact,
\[
h'(y)=y\frac{y^2+1+2(1-y^2)\log y}{(y^2+1)^3},
\]
and its sign, when $y>1,$  is the contrary of the sign of
\[
H(y)=\log y-\frac{y^2+1}{2(y^2-1)},
\]
which can be easily studied because $H'(y)=(y^4+1)/(y(y^2-1)^2)>0.$
Hence, for $y>1,$ the function $h$ is positive and increasing until
some value $y=y^*$ and then decreases monotonically towards zero. By
Bolzano's Theorem,  $y^*\in (\underline y, \overline y):=
(2.16,2.17).$ Hence
\[
\max_{y>1} h(y)< \frac{\overline y\,^2\log \overline y}{(\underline
y^{ 2}+1)^2} \quad \mbox{and}\quad \frac{25}{32}\frac{\overline
y\,^2\log \overline y}{(\underline y^{ 2}+1)^2}<\frac1{10}.
\]
Combining \eqref{f1}, \eqref{f2} and \eqref{eee} we obtain that \[
\Phi^n(a_nx+b_n)\le \Lambda\left(-\int_{a_nx+b_n}^{b_n} V(t)\,dt
\right)\le \frac 4{e^2} \exp\left\{
\frac{2-23b_n^2}{10(b_n^2+1)}\log b_n  \right\}.
\]
Hence, once we prove that
\begin{equation}\label{falta1}
\max_{y\ge 1}  P(y) < \frac 23,
\end{equation}
where
\[
P(y)=\frac 4{e^2} y^2 \exp\left\{ \frac{2-23y^2}{10(y^2+1)}\log y
\right\}=\frac 4{e^2} y^2 Q(y),
\]
 we will have that
\begin{equation*} \Phi^n(a_nx+b_n)\le \frac
{2}{3b_n^2}\le
 \frac{2 \log n_0}{3b_{n_0}^2}\frac
1{\log n}.
\end{equation*}
where note that we have used once more Proposition~\ref{p5}.

Joining \eqref{f4} and the above inequality we get that when
$x\in[-{b_n}/{a_n},-1.25\log n]$,
\begin{equation}\label{f11}
 \big\vert \Phi^n(a_nx+b_n)-\Lambda(x) \big\vert \le
\frac{2 \log n_0}{3b_{n_0}^2}\frac1{\log n},
\end{equation}

Hence, to end this part of the proof we need to
prove~\eqref{falta1}.  To study the function $P(y)$ we compute
\[
P'(y)=\frac2{5e^2}\frac{y}{(1+y^2)^2}\,Q(y)\,q(y),\quad\mbox{with}\quad
q(y)=(22-3y^2)(1+y^2)-50y^2\log y.
\]
Moreover, for $y\ge1,$ $q'(y)=-100\, y\log y-12(1+y^2){ y}<0.$
Joining all this information we get that for $y\ge1,$ the function
$P'(y)$ is decreasing and has a unique zero $y^*$ and, by Bolzano's
Theorem, $y^*\in(\underline y,\overline y):=(1.532,1.533).$
Therefore the function $P$ is increasing in $[1,y^*)$ and decreasing
in $(y^*,\infty)$. As a consequence,
\[
\max_{y\ge 1} P(y)=P(y^*)<\frac 4{e^2} \overline y^2 \exp\left\{
\frac{2-23\underline y^2}{10(\overline y^2+1)}\log \underline
y\right\}<0.66<\frac23,
\]
as we wanted to prove.

\bigskip

\noindent{\bf 3.} Case $x\in(-1.25\log n,0)$.  Using
Proposition~\ref{dife} we write

\begin{equation*}
\Phi^n (a_n x+b_n)-\Lambda(x)=e^{-n
S_n(x)}\Lambda(x)\Big(\frac{\Lambda\left(I_n(x)\right)}{\Lambda(x)}-1\Big)+\Lambda(x)\left(e^{-nS_n(x)}-1
\right).
\end{equation*}
Hence
\begin{equation}\label{phi_lambda2}
\left\vert \Phi^n (a_n
x+b_n)-\Lambda(x)\right\vert\le\Lambda(x)\Big\vert\frac{\Lambda\left(I_n(x)\right)}{\Lambda(x)}-1\Big\vert
+\Lambda(x)\left\vert e^{-nS_n(x)}-1 \right\vert.
\end{equation}

 We start proving that
\begin{equation}\label{oblit}
-I_n(x)=\int_{a_nx+b_n}^{b_n} V(t)\,dt\le -x.
\end{equation}
Recall that by \eqref{fitesV}, for $t>0,$ $V(t)\le t+1/t$ and
moreover that $V$ is an increasing function. Hence
\[
\int_{a_nx+b_n}^{b_n} V(t)\,dt\le -V(b_n)a_nx\le
-\Big(b_n+\frac1{b_n}\Big)a_nx=-\frac{b_n^2+1}{b_n}a_nx=-x.
\]
Therefore, for the first term of the right hand side of
\eqref{phi_lambda2} we have
\begin{align}\label{lam5}
\Lambda(x)\Big\vert \frac{\Lambda\left(I_n(x)\right)}{\Lambda(x)}
-1\Big\vert&=\Lambda(x)\left\vert\exp\left(e^{-x}-e^{-I_n(x)}
 \right) -1\right\vert\nonumber\\
&=\Lambda(x)\left\{\exp\left(e^{-x}-e^{-I_n(x)}
 \right) -1\right\}\nonumber\\
 &\le \Lambda(x)\exp\left\{e^{-x}-e^{-I_n(x)} \right\}\left(e^{-x}-e^{-I_n(x)}
 \right),
 \end{align}
where in the last inequality we have applied the Mean Value Theorem
to $e^x$.

Now, notice  that taking into account that for $y\ge0,$ $1-e^{
-y}\le y,$
\begin{align}\label{uf}
e^{-x}-e^{-I_n(x)}&=e^{-x}\left(1- e^{x+\int_{a_nx+b_n}^{b_n}
V(t)\,dt}\right)\le e^{-x}\left(-x-\int_{a_nx+b_n}^{b_n}
V(t)\,dt\right)\nonumber\\&\le
e^{-x}\left((a_nb_n-1)x-\frac{a_n^2}2x^2\right)\le
e^{-x}\Big(-x+\frac{x^2}2\Big)\frac1{b_n^2},\end{align} where in the
last inequalities we have used first that $V(t)>t,$ and later that
$a_n^2$ and $1-a_nb_n$ are both smaller than $1/b_n^2.$

To continue, notice that since $-x\le 1.25\log b_n,$ then $b_n^2\ge
\exp(-8x/5).$ Hence, from~\eqref{lam5} and~\eqref{uf},
\[
\Lambda(x)\Big\vert \frac{\Lambda\left(I_n(x)\right)}{\Lambda(x)}
-1\Big\vert\le Q(x)\frac1{b_n^2},
\]
where
\[
Q(x)=\Big(-x+\frac{x^2}2\Big)\exp\left\{-x-e^{-x}+
\Big(-x+\frac{x^2}2\Big)e^{3x/5}\right\}=\Big(-x+\frac{x^2}2\Big)T(x).
\]
We claim
\begin{equation}\label{claim}
0<\max_{x<0} Q(x)<0.63.
\end{equation}
Therefore
\begin{equation}\label{des1}
\Lambda(x)\Big\vert \frac{\Lambda\left(I_n(x)\right)}{\Lambda(x)}
-1\Big\vert\le \frac{0.63}{b_n^2}.
\end{equation}

Let us prove now the inequality \eqref{claim} given in the above
claim. Notice that
\begin{equation*}
Q'(x)=\frac1{20}T(x)\,t(x)=\frac1{20}T(x)\,\Big(t_0(x)+t_1(x)e^{-x}+t_2(x)e^{3x/5}\Big),
\end{equation*}
where $t_0(x)=-10(x^2-4x+2)$, $t_1(x)=10x(x-2)$ and
$t_2(x)=x(x-2)(3x^2+4x-10).$  To study the sign of $Q'(x)$ we
consider the function $t(x)$ for $x<0.$  We get that
\[
t''(x)=-20+s_1(x)e^{-x}+s_2(x)e^{3x/5},
\]
where $s_1(x)=10(x^2-6x+6)>0$ and
$s_2(x)=(27x^4+342x^3+558x^2-1200x-300)/25.$ It is clear that
\[
t''(x)\ge -20+s_1(x)+s_2(x)e^{3x/5}=s_0(x)+s_2(x)e^{3x/5}=S(x),
\]
with $s_0(x)=10(x^2-6x+4).$ Hence, if we prove that $S(x)>0$ we will
have the convexity of $t(x)$. Joining this information with the fact
that $t(0)=-20<0, t'(0)=40>0$ and that $t(x)$ tends to infinity when
$x$ goes to minus infinity, we obtain that $t(x)$ has a unique zero
$x=x^*$ in $(-\infty,0).$ By Bolzano's Theorem $x^*\in(\underline
x,\overline x):=(-1.051,-1.050).$ Finally $Q$ is increasing on
$(-\infty,x^*)$ and decreasing in $(x^*,0)$ and therefore,
\[
\max_{x< 0} Q(x)=Q(x^*)<\Big(-\underline x+\frac{\underline
x^2}2\Big)\exp\left\{-\underline x-e^{-\overline x}+
\Big(-\underline x+\frac{\underline x^2}2\Big)e^{3\overline
x/5}\right\}<0.63,
\]
as we wanted to prove. That $S(x)>0$ for $x<0,$ can be proved by
using similar arguments and we omit the details.

To end the proof it remains to study the term $\Lambda(x)\vert
e^{-nS_n(x)}-1\vert,$ where recall that from Proposition~\ref{dife},
\[
S_n(x)\le \frac{C_n^2(x)}{2(1-C_n(x))}\quad \mbox{and}\quad
C_n(x)=\frac{1}{n} \,\exp\Big\{\int^{b_n}_{a_nx+b_n} V(t) \,
dt\Big\}=1-\Phi(a_nx+b_n).\] Hence
\[
nS_n(x)\le \frac{\left( \exp\Big\{\int^{b_n}_{a_nx+b_n} V(t) \,
dt\Big\}\right)^2  }{2n\Phi(a_nx+b_n)}\le \frac 1n  e^{-2x},
\]
where we have used~\eqref{oblit} and that $\Phi(a_nx+b_n)>1/2,$
because $a_nx+b_n>0.$

Using this inequality, that $1-e^{-y}\le y$ for $y\ge0$ and that
$e^{-x}\ge 1-x+x^2/2,$ for $x\le0$,
 we obtain
\begin{align}\label{des2}
\Lambda(x)\big\vert e^{-nS_n(x)}-1\big\vert&\le \frac 1n
\Lambda(x)e^{-2x}\le \frac 1n e^{-1+x-x^2/2} e^{-2x}\nonumber\\ &\le
\frac 1n e^{-1/2} e^{-(x+1)^2/2}\le\frac{\log
n}{\sqrt{e}n}\frac1{\log n}\le \frac{\log
n_0}{\sqrt{e}n_0}\frac1{\log n}.
\end{align}
Joining \eqref{des1} and \eqref{des2} and using Proposition~\ref{p5}
we arrive to
\begin{equation}\label{f10}
 \big\vert \Phi^n(a_nx+b_n)-\Lambda(x) \big\vert \le \left(
\frac{0.63\log n_0}{b_{n_0}^2} +\frac{\log
n_0}{\sqrt{e}n_0}\right)\frac1{\log n},
\end{equation}
for the values of $x$ considered in this case.

Finally, collecting the right hand terms of inequalities
\eqref{ff2}, \eqref{f11} and \eqref{f10} we have that for $n\ge16,$
\[
\max\left(\frac{\log n_0}{2^{n_0}},\frac{2\log
n_0}{3b_{n_0}^2},\Big( \frac{0.63\log n_0}{b_{n_0}^2} +\frac{\log
n_0}{\sqrt{e}n_0}\Big) \right)<\frac{2\log n_0}{3b_{n_0}^2}
+\frac{\log n_0}{\sqrt{e}n_0}.
\]
Hence
\[
 \big\vert \Phi^n(a_nx+b_n)-\Lambda(x) \big\vert < \Big( \frac{2\log n_0}{3b_{n_0}^2} +\frac{\log
n_0}{\sqrt{e}n_0}\Big)\frac 1{\log n},
\]
and the proposition follows.
\end{proof}

\subsection{Global rate  of convergence: proof of Theorem \ref{maintheorem}}
 The first part
of the Theorem \ref{maintheorem}  is a straightforward consequence
of Propositions~\ref{pos} and~\ref{neg}: For $n_0\ge16$,  because it
is easy to prove that $C^-(n_0)>C^+(n_0)$. For $5\le n_0\le 15$
because $C^-(n_0)=1$ and $C^+(n_0)<1.$

Proposition \ref{p4} provides upper and lower bounds for $b_n^2$.
These bounds substituted in $C(n_0)$ give easily that
$\lim_{n_0\to\infty} C(n_0)=1/3.$ \qed

\subsection{Other norming constants $a_n$}
\label{sub_other_an}
If instead of $a_n^\circ=\af(b_n)=b_n/(1+b_n^2)$ it is used
$\ah(b_n)=1/b_n$, applying similar  tools that in the proof of
Proposition~\ref{pos} we get the following result:

\begin{proposition}\label{posbis} Given $n_0\ge2,$ for all $n\ge n_0$ it
holds that
\begin{equation*}
\sup_{x\ge 0}\vert \Phi^n \big(\ah(b_n)\,
x+b_n\big)-\Lambda(x)\vert <\frac{\overline{C}^+(n_0)}{\log
n},\end{equation*} with \begin{equation*} \overline{C}^+(n_0)=
\frac{\sqrt2+1}{e^{\sqrt 2}} \frac{\log n_0}{b_{n_0}^2}+\frac{\log
n_0}{2(n_0-1)}.
\end{equation*}
\end{proposition}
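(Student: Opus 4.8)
The plan is to transcribe the proof of Proposition~\ref{pos}, now taking $a_n=\ah(b_n)=1/b_n$ in place of $\af(b_n)$. Starting from the representation in Proposition~\ref{dife} with $a_n'=1/b_n$, for $x\ge0$ one has $I_n(x)\ge0$, hence $0<C_n(x)\le1/n$ and $0<S_n(x)<1/(2n(n-1))$. This part of the estimate is insensitive to the precise value of $a_n$, so the bound
\[
\big\vert e^{-nS_n(x)}-1\big\vert<\frac{\log n_0}{2(n_0-1)}\,\frac1{\log n}
\]
established in \eqref{part1} carries over verbatim and contributes the second summand of $\overline C^+(n_0)$. It then remains to estimate $\big\vert\Lambda(I_n(x))-\Lambda(x)\big\vert$.

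The key simplification relative to Proposition~\ref{pos} is that now $a_nb_n=1$. Using the lower bound $V(t)>t$ from \eqref{fitesV},
\[
I_n(x)=\int_{b_n}^{a_nx+b_n}V(t)\,dt>\int_{b_n}^{a_nx+b_n}t\,dt=\frac{x^2}{2b_n^2}+a_nb_n\,x=\frac{x^2}{2b_n^2}+x\ge x
\]
for every $x\ge0$. Hence, unlike in Proposition~\ref{pos}, the case $I_n(x)<x$ never arises, and I would only have to treat $I_n(x)\ge x$, which streamlines the argument. In that single case I would proceed as in \eqref{nnn2}: since $\Lambda'$ is decreasing on $(0,\infty)$, the Mean Value Theorem gives
\[
\big\vert\Lambda(I_n(x))-\Lambda(x)\big\vert\le\Lambda'(x)\big(I_n(x)-x\big)\le e^{-x}\big(I_n(x)-x\big).
\]
To control $I_n(x)-x$ I would use the upper bound $V(t)<t+1/t$ together with $\log(1+y)\le y$; with $a_n^2=1/b_n^2$ and $a_nb_n=1$ this yields
\[
I_n(x)-x<\frac{x^2}{2b_n^2}+\log\Big(1+\frac{x}{b_n^2}\Big)\le\frac1{b_n^2}\Big(\frac{x^2}{2}+x\Big).
\]
Substituting and maximizing $e^{-x}\big(x^2/2+x\big)$ over $x\ge0$, whose derivative $e^{-x}(1-x^2/2)$ vanishes at $x=\sqrt2$, produces the sharp constant
\[
\big\vert\Lambda(I_n(x))-\Lambda(x)\big\vert\le e^{-\sqrt2}\big(1+\sqrt2\big)\,\frac1{b_n^2}=\frac{\sqrt2+1}{e^{\sqrt2}}\,\frac1{b_n^2}.
\]
Finally, Proposition~\ref{p5} lets me replace $1/b_n^2$ by $\frac{\log n_0}{b_{n_0}^2}\,\frac1{\log n}$, and adding the two contributions gives the asserted bound $\overline C^+(n_0)/\log n$.

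The only genuinely new idea beyond Proposition~\ref{pos} is the observation that $a_nb_n=1$ forces $I_n(x)\ge x$, collapsing the two-case split into one; everything else is a direct transcription. Consequently the main piece of work is the elementary optimization delivering the constant $(\sqrt2+1)/e^{\sqrt2}$, and I expect no real obstacle. The only point to keep in mind is the degenerate value $n=2$ (where $b_n=0$ and $\ah(b_n)$ is undefined), which is harmless because then $\overline C^+(n_0)$ is infinite and there is nothing to prove; for the substantive range $n_0\ge3$ Proposition~\ref{p5} applies as stated.
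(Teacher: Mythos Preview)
Your proof is correct and follows the same overall route as the paper: the same decomposition via Proposition~\ref{dife}, the same bound \eqref{part1} on the $S_n$--term, the same upper estimate $I_n(x)-x\le(x^2/2+x)/b_n^2$ from $V(t)<t+1/t$, the same optimization yielding $(\sqrt2+1)/e^{\sqrt2}$, and the same appeal to Proposition~\ref{p5}. The one point where you differ is the verification that $I_n(x)\ge x$: the paper argues via $\ac(b_n)<\ah(b_n)$, the Mean Value Theorem and the monotonicity of $\ac$, whereas your direct computation from $V(t)>t$ and $a_nb_n=1$ is shorter and more transparent.
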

\begin{proof} Starting as in the proof of Proposition~\ref{pos} we
obtain that
\begin{equation}\label{zz}
\big\vert\Phi^n (\ah(b_n)
x+b_n)-\Lambda(x)\big\vert\le\Big\vert\Lambda\left(
{I}_n(x)\right)-\Lambda(x)\Big\vert +\frac{\log
n_0}{2(n_0-1)}\frac{1}{\log n},
\end{equation}
where here
\[
 {I}_n(x)=\int_{b_n}^{\ah(b_n)x+b_n}
V(t)\,dt,\quad\mbox{and}\quad \ah(b_n)=\frac 1{b_n}.\]

To study the remainder left hand term of~\eqref{zz} let us prove
first that under our hypotheses $ {I}_n(x)>x$. Notice that since
$\ac(t)<1/t$, $\ac(b_n)<\ah(b_n)$. Then, by the Mean Value Theorem,
there is $x_1\in [0,x]$ such that
\begin{align*}
 {I}_n(x)&=\int_{b_n}^{\ah(b_n)x+b_n} V(t) \,
dt>\int_{b_n}^{\ac(b_n)x+b_n} V(t) \, dt
\\&=V\big(\ac(b_n)x_1+b_n\big)\ac(b_n)\, x=
 \frac{\ac(b_n)}{\ac\big(\ac(b_n)x_1+b_n\big)}\, x>x,
\end{align*}
where in the last step we have used that $\ac$ is decreasing. Then
\begin{align}\label{nnn3}
\Big\vert\Lambda\left( {I}_n(x)\right)-\Lambda(x)\Big\vert&=
\Lambda\left( {I}_n(x)\right)-\Lambda(x)\le\Lambda'(x)
\big( {I}_n(x)-x\big)\nonumber\\
&=\Lambda(x)e^{-x}\big( {I}_n(x)-x\big)\le e^{-x}\big(
{I}_n(x)-x\big),
\end{align}
using once more  that for $x>0,$ $\Lambda(x)$ is increasing, and the
Mean Value Theorem.

Now, taking into account that $V(t)\le t+1/t$ (see (\ref{fitesV}))
and again that for $y>-1,$   $\log(1+y)\le y,$ we obtain that
\begin{equation}
\label{desigualtat_int}  {I}_n(x)-x=\int_{b_n}^{x/b_n+b_n} V(t) \,
dt-x\le \frac{1}{2}\,\frac{x^2}{b_n^2}+
\log\Big(\frac{x}{b_n^2}+1\Big)< \frac{x^2/2+x}{b_n^2}.
\end{equation}
Moreover, the following bound is immediate:  for $x\ge 0$,
\begin{equation}
\label{desigualtat_y} 0\le e^{-x}(x^2/2+x)\le e^{-\sqrt2}(\sqrt2+1).
\end{equation}
Joining \eqref{nnn3}, (\ref{desigualtat_int}) and
(\ref{desigualtat_y}),   we get
\begin{equation}\label{part0}
\Big\vert\Lambda\left( {I}_n(x)\right)-\Lambda(x)\Big\vert\le
\frac{\sqrt2+1}{e^{\sqrt{2}}b_n^2}.\end{equation} Finally, applying
Proposition~\ref{p5} the result  follows.
\end{proof}

\begin{remark}\label{comp} (i)  Notice that when $x\ge0$, applying
Proposition~\ref{pos} and Proposition~\ref{p4} we have that choosing
$a_n^\circ=\af(b_n)=b_n/(1+b_n^2)$ we get that
\[
\lim_{n_0\to\infty} C^+(n_0)=\frac1{2e}\approx0.184,
\]
while Proposition~\ref{posbis} implies that when
$a_n=\ah(b_n)=1/b_n$ then
\[
\lim_{n_0\to\infty} \overline{C}^+(n_0) =\frac{\sqrt2+1}{2e^{\sqrt
2}}\approx0.294
\]
The above results are coherent with the numerical results presented
in Table~\ref{comparison-t} and show that the first  choice
$a_n^\circ=\af(b_n)$ gives best approximations. We do not develop here
the case $x<0$ for $\ah(b_n)$.

(ii) It is worth  noting  that  since $\ac(t)<1/t$, $\ac(b_n)<\ah(b_n)$, it is
easy to see that Proposition~\ref{posbis} also  holds replacing
$\ah(b_n)$ by $\ac(b_n)$. Nevertheless the bound given by this
result seems less accurate than the ones provided in
Propositions~\ref{pos} and~\ref{posbis}, see
again~Table~\ref{comparison-t}.
\end{remark}

\section{Explicit norming constants}\label{se:5}
Since the expression $b^*_n$ is not  explicit, by using
 asymptotic analysis it can be deduced  expressions asymptotically  equivalent
  for
 $b_n^*$ and $a_n^*$ (they satisfy Property \ref{propietat1})

 \begin{equation}
 \label{beta}
 \beta^*_n=(2\log n)^{1/2}-\frac{\log\log n+\log(4\pi)}{2\,(2\log n)^{1/2}}
 \end{equation}
  and
 $$\alpha^*_n=(2\log n)^{-1/2}.$$
 (see, for example, Resnick \cite[pp. 71--72]{Res87}).
  An easy way to deduce these constants and to  suggest other
 ones more suited to previous results is to use the Lambert W function and its
 extensions.

 \subsection{Lambert $W$ function and extensions}
 \label{subsec:comtet}

For $t>0$, the equation  $ye^y=t$ has a unique
 real positive solution $y$, which determines (for $t>0$)  the principal branch of the
real Lambert W function, that means,
 $W(t)$ satisfies
$$W(t)\, e^{W(t)}=t,$$
and  $\lim_{t\to\infty}W(t)=\infty$ (see Corless {\it et al.}
\cite{CorGonHarJefKnu86} for a complete overview of Lambert W
function and many applications).
  The asymptotic expansion
of this function is given by Corless {\it et al.} \cite[pp. 22 and
23]{CorGonHarJefKnu86},
 see also
De Bruijn \cite[pp. 25--27]{DeB81}.
\begin{equation}
\label{expansion} W(t)= \log t-\log \log t+\frac{\log\log t}{\log
t}+O\bigg(\Big(\frac{\log\log t}{\log
t}\Big)^2\bigg),\quad  t\to \infty.
\end{equation}

For $\gamma\ne 0$, Comtet \cite{Com70} extended that
 expansion to the (unique) positive solution  $y$ of the equation
$$y^\gamma e^y=t$$
such that $y\to\infty$ when $t\to \infty$. Later, Robin \cite{Rob88}
and Salvy \cite{Sal92} extended Comtet \cite{Com70} results in order
to deduce an asymptotic expansion of the solution  of the equation
\begin{equation}
\label{eq:salvi}
y^\gamma e^{y} D\Big(\frac{1}{y}\Big)=t,
\end{equation}
 where
$$D(y)=\sum_{n=0}^\infty d_n y^n, \ \text{with} \ d_0\ne 0,$$
is a power series convergent in a neighborhood of the origin. Denote
by $U_D(t)$ that solution. We are only interested on the case
$\gamma=1$ and $d_0=1$, and for this case, Robin \cite{Rob88} and
Salvy \cite{Sal92} prove
\begin{equation*}
 U_D(t)= \log t-\log \log t + \frac{\log \log t-d_1}{\log t} +
\frac{Q_2(\log\log t)}{(\log t)^2}+ o\bigg(\frac{1}{(\log
t)^2}\bigg),\, t\to \infty,
\end{equation*}
where $Q_2$ is a polynomial of degree 2, whose coefficients depend
on $D$. The above expression implies that
\begin{equation}
\label{ugamma} U_D(t)= \log t-\log \log t + \frac{\log \log
t-d_1}{\log t} + O\bigg(\Big(\frac{\log\log t}{\log
t}\Big)^2\bigg),\ t\to \infty.
\end{equation}


 \subsection{Return to the norming constants}
 Thanks to (\ref{bnHall}), the constant $b^*_n$ can be written in terms of the principal
 branch of  Lambert function:
 $$b_n^*=\Big(W\big(n^2/(2\pi)\big)\Big)^{1/2}.$$
 Hence, from, (\ref{expansion}),
\begin{align*}
 b_n^*&=\bigg(\log\big(n^2/(2\pi)\big)-\log\log\big(n^2/(2\pi)\big)
 +O\Big(\frac{\log\log n}{\log n}\Big)\bigg )^{1/2}\\
&=(2\log n)^{1/2}-\frac{\log(4\pi\log n)}{2\,(2\log n)^{1/2}}
+O\Big(\frac{(\log \log n)^2}{(\log n)^{3/2}}\Big)\\
&=\beta^*_n+O\Big(\frac{(\log \log n)^2}{(\log n)^{3/2}}\Big),\quad
n\to\infty.
\end{align*}
 Notice that if we
introduce the following sequence
\[
\overline
\beta_n^*=\bigg(\log\big(n^2/(2\pi)\big)-\log\log\big(n^2/(2\pi)\big)
 +\frac{\log\log (n^2/(2\pi))}{\log (n^2/(2\pi))} \bigg )^{1/2},
\]
we obtain a better approximation to $b_n^*$ because
\[
b_n^*=\overline \beta^*_n+ O\left(\dfrac{(\log\log n)^2}{(\log
n)^{5/2}}\right),\quad n\to\infty.\]

In any case,  Theorem \ref{maintheorem} suggests that  the
utilization of an approximation to $b_n$ rather than an
approximation to $b^*_n$ likely  will provide more velocity of
convergence. To this end, in next proposition we compute an
asymptotic expansion of $b_n$ at infinity using the bounds
\eqref{fitesA} for the Mills ratio and the function $U_D$
introduced in~\eqref{ugamma}.

\begin{proposition}\label{asi-bn} It holds that
\begin{equation}\label{appbn}
b_n=\overline \beta_n+O\left(\dfrac{(\log\log n)^2}{(\log
n)^{5/2}}\right),\quad n\to\infty,
\end{equation}
where
\[
\overline
\beta_n=\bigg(\log\big(n^2/(2\pi)\big)-\log\log\big(n^2/(2\pi)\big)
 +\frac{\log\log (n^2/(2\pi))-2}{\log (n^2/(2\pi))} \bigg )^{1/2}.
\]
\end{proposition}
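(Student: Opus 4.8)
The plan is to convert the defining relation for $b_n$ into an equation of the Robin--Salvy type \eqref{eq:salvi} and then read off the expansion from \eqref{ugamma}. Starting from the Mills ratio identity $1/n=1-\Phi(b_n)=\ac(b_n)\phi(b_n)=\ac(b_n)\,e^{-b_n^2/2}/\sqrt{2\pi}$, squaring and setting $y=b_n^2$, $t=n^2/(2\pi)$ gives $e^{y}/\ac(\sqrt y)^2=t$, that is $y\,e^{y}\bigl(y\,\ac(\sqrt y)^2\bigr)^{-1}=t$. The obstacle is that $\ac$ is transcendental, so the factor $\bigl(y\,\ac(\sqrt y)^2\bigr)^{-1}$ is only an asymptotic, generally divergent, series in $1/y$, and \eqref{ugamma}, which requires a convergent $D$, cannot be applied to $b_n$ directly.

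To circumvent this I would sandwich $b_n$ between the solutions of two genuine equations of the form \eqref{eq:salvi}. Using \eqref{fitesA}, $\frac{t}{t^2+1}<\ac(t)<\frac{t^2+2}{t^3+3t}$, define $b_n^{L}$ and $b_n^{U}$ by $\frac{b_n^{L}}{(b_n^{L})^2+1}\,\phi(b_n^{L})=1/n$ and $\frac{(b_n^{U})^2+2}{(b_n^{U})^3+3b_n^{U}}\,\phi(b_n^{U})=1/n$. Both comparison functions are strictly decreasing for large argument and bracket $1-\Phi$, so monotonicity yields $b_n^{L}<b_n<b_n^{U}$ for all large $n$. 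Squaring the two defining relations and writing $y=(b_n^{L})^2$, respectively $y=(b_n^{U})^2$, produces
\[
y\,e^{y}\,D_L(1/y)=\frac{n^2}{2\pi},\qquad y\,e^{y}\,D_U(1/y)=\frac{n^2}{2\pi},
\]
with $D_L(w)=(1+w)^2$ and $D_U(w)=(1+3w)^2/(1+2w)^2$, both analytic at the origin with value $1$ there. A short expansion gives $D_L(w)=1+2w+w^2$ and $D_U(w)=1+2w-3w^2+\cdots$, so in both cases the coefficient entering \eqref{ugamma} is $d_1=2$.

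Applying \eqref{ugamma} with $\gamma=1$, $d_1=2$ and $t=n^2/(2\pi)$ to each comparison constant then gives
\[
(b_n^{L})^2,\ (b_n^{U})^2=\log t-\log\log t+\frac{\log\log t-2}{\log t}+O\!\left(\Bigl(\tfrac{\log\log t}{\log t}\Bigr)^2\right)=\overline\beta_n^{\,2}+O\!\left(\Bigl(\tfrac{\log\log t}{\log t}\Bigr)^2\right),
\]
the last equality being the definition of $\overline\beta_n$. Since $b_n$ is squeezed between $b_n^{L}$ and $b_n^{U}$ (hence $b_n^2$ between their squares), the same expansion holds for $b_n^2$; and because $\log t=2\log n-\log(2\pi)=\Theta(\log n)$ and $\log\log t=\Theta(\log\log n)$, the error is $O\!\bigl((\log\log n)^2/(\log n)^2\bigr)$. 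Finally I would pass from $b_n^2$ to $b_n$ via $b_n-\overline\beta_n=(b_n^2-\overline\beta_n^{\,2})/(b_n+\overline\beta_n)$: since $b_n,\overline\beta_n\sim\sqrt{2\log n}$, dividing by $b_n+\overline\beta_n=\Theta(\sqrt{\log n})$ raises the power of $\log n$ in the denominator from $2$ to $5/2$, yielding the claimed bound $O\bigl((\log\log n)^2/(\log n)^{5/2}\bigr)$. The main difficulty is exactly the divergence flagged at the outset: the rational two-sided bounds \eqref{fitesA} must replace the exact Mills ratio, and the crux is verifying that both feed into \eqref{ugamma} with the same $d_0=1$ and $d_1=2$, which forces the two comparison expansions to agree to the order needed and lets the squeeze close.
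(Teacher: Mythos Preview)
Your proposal is correct and follows essentially the same route as the paper: sandwich $b_n$ between solutions $v_n$ and $V_n$ of the rational comparison equations coming from the Mills ratio bounds in \eqref{fitesA}, observe that after the substitution $y=x^2$, $t=n^2/(2\pi)$ both satisfy \eqref{eq:salvi} with $D(w)=1+2w+O(w^2)$ so that $d_1=2$ in \eqref{ugamma}, and finish by $b_n-\overline\beta_n=(b_n^2-\overline\beta_n^{\,2})/(b_n+\overline\beta_n)$. Your write-up is in fact a bit more explicit than the paper's in two places (you spell out why the transcendental $\ac$ prevents applying \eqref{ugamma} directly, and you compute $D_L,D_U$ in closed form), but the argument is the same.
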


\begin{proof} By inequalities \eqref{fitesA} we know that for $x>0,$
\[
r(x)\phi(x)<1-\Phi(x)<R(x)\phi(x),
\]
where
\[
r(x)=\frac{x}{x^2+1}\quad\mbox{and}\quad R(x)=\frac{x^2+2}{x^3+3x}.
\]
For $n$ large enough, let $v_n$ (resp. $V_n$) be the solution of the
equation $r(x)\phi(x)=1/n$ (resp. $R(x)\phi(x)=1/n$). Recall that
$b_n$ satisfies $1-\Phi(b_n)=1/n.$ Therefore, for these values of
$n$  it holds that
\begin{equation}\label{sand}
v_n\le b_n \le V_n.
\end{equation}
Let us compute the asymptotic expansions at infinity of $\{v_n\}$
and $\{V_n\}.$ Notice that $v_n$ satisfies the equation
\[
\frac{n^2}{2\pi}=x^2e^{x^2}\Big(1+\dfrac1{x^2}\Big)^2=x^2e^{x^2}\Big(1+2\dfrac1{x^2}+\dfrac1{x^4}\Big),
\]
while $V_n$ satisfies
\[
\frac{n^2}{2\pi}=x^2e^{x^2}\left(\frac{1+\frac3{x^2}}{1+\frac2{x^2}}\right)^2=x^2e^{x^2}\left(1+2\dfrac1{x^2}+O\Big(\frac1{x^4}\Big)\right).
\]

By changing $x^2$ by $y$, and  with the notations of Subsection
\ref{subsec:comtet}, both $v_n$ and $V_n$, are
$\big(U_D\big(n^2/(2\pi)\big)\big)^{1/2},$ for some analytic
functions $D$, both satisfying
$$D(y)=1+2{y}+O\big({y^2}\big).$$
Then, from (\ref{ugamma}) and \eqref{sand}, we arrive to the same
asymptotic expansions for $v_n,b_n$ and $V_n.$ Specifically,
$$b_n=\bigg(\log\big(n^2/(2\pi)\big)-\log\log\big(n^2/(2\pi)\big)+
\frac{\log\log\big(n^2/(2\pi)\big)-2}{\log\big(n^2/(2\pi)\big)}+O\bigg(\Big(\frac{\log\log
n}{\log n}\Big)^2\bigg)\bigg)^{1/2}.$$ From the above expression,
\eqref{appbn} follows from
$\sqrt{w+a}-\sqrt{w}=a/(\sqrt{w+a}+\sqrt{w}).$
\end{proof}

\begin{remark}\label{afegit}
 Using the same tools that in the proof of the above
proposition we obtain that
\begin{enumerate} [(i)]
\item $b_n-b_n^*=O\left(\dfrac{1}{(\log
n)^{3/2}}\right),\quad n\to\infty,$
\item $\overline \beta_n-\beta_n^*=O\left(\dfrac{(\log\log n)^2}{(\log
n)^{3/2}}\right),\quad n\to\infty,$
\item $\overline \beta_n-\overline\beta_n^*=O\left(\dfrac{1}{(\log
n)^{3/2}}\right),\quad n\to\infty.$
\end{enumerate}
\end{remark}

\begin{remark} It is also possible to construct some approximations of
$b^*_n$ and $b_n$ that are  improvements of \eqref{beta} adding some
suitable terms. In fact, if we define:
\begin{align*}
\widetilde \beta_n^*&=\sqrt{2\log n}-\frac{\log(4\pi\log
n)}{2\sqrt{2\log n}}-\frac{\big(\log(4\pi\log
n)\big)^2-4\log(4\pi\log n)}{8\sqrt{(2\log n)^{3}}},\\
\widetilde \beta_n&=\sqrt{2\log n}-\frac{\log(4\pi\log
n)}{2\sqrt{2\log n}}-\frac{\big(\log(4\pi\log
n)\big)^2-4\log(4\pi\log n)+8}{8\sqrt{(2\log n)^{3}}},
\end{align*}
it also holds  that
\[
b_n^*=\widetilde \beta_n^*+O\left(\dfrac{(\log\log n)^2}{(\log
n)^{5/2}}\right)\quad \mbox{and}\quad  b_n=\widetilde
\beta_n+O\left(\dfrac{(\log\log n)^2}{(\log n)^{5/2}}\right) \quad
n\to\infty.
\]
Nevertheless the approximations $\overline \beta^*_n$ and $\overline
\beta_n$, respectively, are sharper, specially for small $n.$
\end{remark}

 \subsection{From $\overline \beta_n$ to $\beta_n$}\label{subsec:numer}

As we have seen in the previous subsection, $\overline \beta_n$ is a
very good approximation of $b_n$. Nevertheless, for each $p,q\in\R$,
if we introduce the new constants
\[
B_n(p,q)=\bigg(\log\big(n^2/(2\pi)\big)-\log\log\big(n^2/(2\pi)\big)+
\frac{\log\big(\log
(n^2)+p\big)-2}{\log\big(n^2\big)+q}\bigg)^{1/2},
\]
it also holds that
\[
b_n=B_n(p,q)+O\left(\dfrac{(\log\log n)^2}{(\log
n)^{5/2}}\right),\quad n\to\infty.
\]
In particular, $\overline
\beta_n=B_n\big(-\log(2\pi),-\log(2\pi)\big).$

To obtain some values of $p$ and $q$ that provide better
approximations to $b_n$, at least for  $n$ in the most used range
$[10,10^5],$ we proceed as follows:  For simplicity we fix
$q=-\log(2\pi)$ and consider $p$ as a free parameter to be
determined. For a given $m\in\N$,  we consider the set of $m-9$
equations
\[
b_{k }-B_{k }\big(p,-\log(2\pi)\big)=0,\quad
 k =10,11,\ldots,m.
\]
The actual values $b_{k }$ are obtained numerically. For each $ k $,
let $p_{k }$ be the solution of the corresponding equation, which is
also obtained numerically. Then we define
\[
\widehat p(m)=\frac 1 {m-9}\sum_{ k =10}^m p_{k }.
\]
Notice that $\widehat p(m)$ can be interpreted as the ``best"
solution for the incompatible system formed by the corresponding
$m-9$ equations. We have obtained that $\widehat
p(10^2)\approx0.59$, $\widehat p(10^3)\approx0.47$, $\widehat
p(10^4)\approx0.47$, and $\widehat p(10^5)\approx 0.52$. These
values suggest to consider $p=1/2$ as a candidate to have an
approximation of $b_n$ that is good both for $n\in[10,10^5]$ and for
$n$ large enough. In short we consider
\[
\beta_n=B_n\big(1/2,-\log(2\pi)\big),
\]
that is precisely the expression \eqref{betafinal} given in the
introduction.

In table \ref{taula.beta} there is a numerical comparison between
all the constants involved in this section for different sample
size. These results   illustrate that the suggested new constant
$\beta_n$ is a very good approximation for $b_n$, and that it is
sharper than $\overline \beta_n$, specially for small values of $n$.
Also
 $\overline\beta_n^*$ is a good approximation to $b_n^*$,
whereas $\beta^*_n$  approximates $b_n^*$, but more slowly.
 The computations to get
the table are done with Maple.

\begin{table}[htb]
\centering
\begin{tabular}{rcccccc}
\toprule
{$\bs n$} & $10$  & $10^2$ & $10^5 $ & $10^{10}$   & $10^{30} $ & $10^{60} $ \\
$b_n$ &  1.28155 & 2.32635 & 4.26489 & 6.36134& 11.46402 & 16.39728 \\
$\beta_n$ & 1.27115& 2.32632& 4.26488& 6.36132& 11.46402 & 16.39728\\
$\overline\beta_n$ & 1.18090& 2.31828& 4.26430& 6.36123& 11.46401 & 16.39728\\
\midrule
$b^*_n$&1.43165& 2.37533& 4.27575& 6.36492 & 11.46467& 16.39750 \\
$\overline \beta^*_n$ & 1.45508 & 2.37607 & 4.27535 & 6.36478 &
11.46465 & 16.39750\\
$\beta^*_n$ & 1.36192 & 2.36625 & 4.28019 & 6.36855 & 11.46611 & 16.39821 \\
\bottomrule
\end{tabular}
\caption{Comparison of the standard constants $\beta^*_n$ and the
constants $\overline\beta^*_n$ with $b^*_n$ and the proposed
constants $\overline\beta_n$ and $\beta_n$ with $b_n$.}
\label{taula.beta}
\end{table}

\subsection*{Conclusions}

As a corollary of Theorem \ref{maintheorem}, Proposition
\ref{asi-bn} and the computations of this section, we propose
\eqref{betafinal},
\begin{equation*}
\beta_n=\bigg(\log\big(n^2/(2\pi)\big)-\log\log\big(n^2/(2\pi)\big)+
\frac{\log\big(\log
(n^2)+1/2\big)-2}{\log\big(n^2/(2\pi)\big)}\bigg)^{1/2},
\end{equation*}
that is a very good approximation for $b_n$, as one of the norming
constants for the maximum of $n$ i.i.d. standard normal random
variables. Also, in agreement with Remark \ref{comp} and Table
\ref{comparison-t}, it seems also convenient to utilize always
$A_{\cal F}$. So we propose, instead of $\alpha_n^*$ to use,
together with $b_n$, the norming constant
$$\alpha_n=\frac{\beta_n}{1+\beta_n^2}.$$

\section*{Acknowledgments}
The first author was   partially
 supported by grants MINECO/FEDER reference MTM2008-03437
 and Generalitat de Catalunya reference 2009-SGR410.  The second and third authors by grants
  MINECO/FEDER reference
   MTM2009-08869 and MINECO reference  MTM2012-33937


\begin{thebibliography}{10}

\bibitem{bar}
{\sc Baricz, \'{A}.}, Mills' ratio: monotonicity patterns and functional
inequalities. {\it J. Math. Anal. Appl.}
  {\bf 340} (2008) 1362--1370.

%

\bibitem{Com70}{\sc Comtet, L.},  Inversion de $y^\alpha e^y$ et $y\log^\alpha y$
au moyen des nombres de Stirling, {\it C. R. Acad. Sc. Paris}, Serie A,  t. {\bf 270} (1970)
1085--1088.

\bibitem{CorGonHarJefKnu86}
{\sc Corless, R.M., Gonnet, G.H., Hare, D.E.G., Jeffrey, D.J.,
Knuth, D.E.},
 On the Lambert W function.
{\it Adv. Comput. Math.} {\bf 5} (1996) 329--359.

\bibitem{DeB81}
{\sc De Bruijn, N.G.,} {\it Asymptotic Methods in Analysis}, Dover,
New York, 1981.


\bibitem{FisherTippett28}{\sc Fisher, R.A. and Tippett, L.H.C.},  Limiting forms of the
frequency distribution of the largest or smallest member of a
sample. {\it Proc. Cambridge Philos. Soc.} {\bf 24} (1928) 180--190.

\bibitem{gu} {\sc Gasull, A. and Utzet, F.}, Approximating Mills
ratio. Arxiv 1307.3433 (2013)

\bibitem{Hall79}{\sc Hall, P.},  On the rate of convergence
of normal extremes.
{\it J. Appl. Probab.}  {\bf 16}  (1979) 433--439.




%

\bibitem{p}  {\sc Pinelis, I.}, Monotonicity properties of the relative error of a Pad\'{e} approximation
for Mills' ratio. {\it J. Inequal. Pure Appl. Math.} {\bf 3} (2002),
Article 20, 8 pp.

\bibitem{Res87}{\sc Resnick, S.I.}, {\it Extreme Values, Regular Variation,
and Point Processes}, Springer, Berlin, 1987.



\bibitem{Rob88}{\sc Robin, G.},  Permanence de relations de recurrence dans certains
developpements asymptotiques. {\it Publications de l'Institut
Math\'{e}matique}, Nouv. s\'{e}r., tome {\bf 43} (57) (1988) 17--25.

\bibitem{Sal92}{\sc Salvy, B.},  Fast computation of some asymptotic
functional inverses. {\it J. Symbolic Comput.}  {\bf 17}  (1994)
227--236.

\bibitem{small10}{\sc Small, C.G.}, {\it Expansions and Asymptotics for
Statistics}, Chapman and Hall/CRC Press, Boca Raton, 2010.


\end{thebibliography}
\end{document}